\newtheorem{theorem}{\rm\bf Theorem}[section]
\newtheorem{proposition}[theorem]{\rm\bf Proposition}
\newtheorem{lemma}[theorem]{\rm\bf Lemma}
\theoremstyle{definition}
\theoremstyle{remark}
\newtheorem{remark}[theorem]{\rm\bf Remark}
\title[]{On Thurston's stretch lines in Teichm\"uller space.}
\date{\today}
\author{Guillaume Th\'eret}
\address{Guillaume Th\'eret, Lyc\'ee Ni\'epce, 71100 Chalon-sur-Sa\^one, France}
\email{guillaume.theret71@orange.fr}
\begin{document}

\begin{abstract}  
The Teichm\"uller space $\mathcal{T}(\Sigma)$ of a surface $\Sigma$ is equipped with Thurston's asymmetric metric. 
Stretch lines are oriented geodesics for this metric on $\mathcal{T}(\Sigma)$.
We give the asymptotic behavior of the lengths of the measured geodesic laminations as one follows a stretch line in the positive direction. 
\end{abstract}
\maketitle

This is a substentially revised version of an old paper from my thesis that has never been published but that I several times referred to.

\section{Introduction}
Consider an oriented connected surface $\Sigma$ without boundary, of finite topological type and of negative Euler-Poincar\'e characteristic.
Let $\mathcal{T}(\Sigma)$ be the Teichm\"uller space of $\Sigma$, that is, the space of isotopy classes of complete and finite-area hyperbolic metrics on $\Sigma$.
Endow the space $\mathcal{T}(\Sigma)$ with Thurston's \emph{asymmetric} metric $L$, defined for instance by
$$
\forall\ x,y\in\mathcal{T}(\Sigma),\quad L(x,y):=\log\inf_{\phi\sim\mathrm{id}_{\Sigma}}L_{x,y}(\phi),
$$
where $L_{x,y}(\phi) : = \sup_{p\neq q}\frac{d_y(\phi(p),\phi(q))}{d_x(p,q)}$ is the Lipschitz constant from the metric $x$ to the metric $y$ of the homeomorphism $\phi$ isotopic to the identity.

\textbf{Stretch lines}, \textbf{stretch rays} are \emph{oriented} geodesics of $\mathcal{T}(\Sigma)$ for the metric $L$.
A stretch line is completely characterized by the mean of two geodesic laminations on $\Sigma$:
a complete geodesic lamination $\mu$, called the \textbf{support} of the stretch line, and the projective class $\lambda$ of a measured geodesic lamination totally transverse to $\mu$, called the \textbf{direction} of the stretch line.
In what follows, we will parameterize a stretch line (respectively, stretch rays) isometrically (i.e., by arc-length) by the real line $\mathbb{R}$ (respectively, by $\mathbb{R}_{\geq0}$) so that orientations match. 
 
Our goal here is to give the asymptotic behavior of the length $\ell_{h_{t}}(\alpha)$ of a measured geodesic lamination $\alpha$ as one indefinitely follows a stretch line $t\mapsto h_t$ in the \emph{positive} direction.
This behavior is described in terms of the intersection pattern of $\alpha$ with the support $\mu$ and the direction $\lambda$ of the stretch line.

\begin{theorem}
\label{theorem:behavior}
Let $t\mapsto h_{t}$, $t\in\mathbb{R}_{\geq0}$, be a stretch ray of Teichm\"uller space $\mathcal{T}(\Sigma)$ whose support is denoted by $\mu$ and its direction by $\lambda$.
Let $\alpha$ be a measured geodesic lamination of $\Sigma$.
\begin{itemize}
\item If $\alpha$ has non-empty transverse intersection with the direction $\lambda$, 

then $\displaystyle{\lim_{t\to+\infty}\ell_{h_{t}}(\alpha)=+\infty}$.
\item If $\alpha$ is contained in the direction $\lambda$, 

then $\displaystyle{\lim_{t\to+\infty}\ell_{h_{t}}(\alpha)=0}$.
\item If $\alpha$ has empty intersection with the direction $\lambda$, 

then the function $t\mapsto\ell_{h_{t}}(\alpha)$ is bounded in $]0\,;+\infty[$.
\end{itemize}
\end{theorem}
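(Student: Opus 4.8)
The plan is to read all three cases off Thurston's horocyclic foliation. Recall that to the pair $(\mu,h_{t})$ Thurston associates a measured foliation $F_{t}$, transverse to $\mu$, foliating each complementary ideal triangle by horocyclic arcs; its projective class is exactly the direction $\lambda$, and it is an invariant of the whole stretch ray in the strong sense that the defining property of the stretch parametrisation is $F_{t}=e^{t}F_{0}$ as measured foliations (stretching multiplies the transverse measure of the horocyclic foliation by $e^{t}$). The two facts I would lean on are Thurston's comparison between hyperbolic length and intersection with this foliation together with a controlled upper bound,
$$
i(\alpha,F_{t})\ \le\ \ell_{h_{t}}(\alpha)\ \le\ i(\alpha,F_{t})+E_{t}(\alpha),
$$
where $E_{t}(\alpha)$ measures the portions of the geodesic representative of $\alpha$ that run along $\mu$ and cross the non-foliated central regions of the triangles, combined with the scaling identity $i(\alpha,F_{t})=e^{t}\,i(\alpha,F_{0})$. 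Since $[F_{0}]=\lambda$, the quantity $i(\alpha,F_{0})$ is positive exactly when $\alpha$ crosses $\lambda$, so the leading term $e^{t}i(\alpha,\lambda)$ is present precisely in the first case and absent in the other two.

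The first case is then immediate: if $\alpha$ meets $\lambda$ transversally then $i(\alpha,\lambda)>0$, and the lower bound gives $\ell_{h_{t}}(\alpha)\ge e^{t}i(\alpha,\lambda)\to+\infty$. The real difficulty lies in the last two cases, which \emph{both} satisfy $i(\alpha,\lambda)=0$ and are therefore invisible to the leading term; separating genuine decay to $0$ from mere boundedness is the heart of the matter, and it forces one to understand $E_{t}(\alpha)$ as $t\to+\infty$ rather than just the dominant term. This is where I expect the main obstacle to be.

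For the third case I would use that $\alpha$ and $\lambda$ have empty intersection and so can be realised disjointly, whence $\alpha$ lies in the part of the surface carrying no horocyclic transverse measure. Since the stretch keeps every complementary region isometric to the standard ideal triangle and only alters the shearing along $\mu$, the combinatorial crossing pattern of $\alpha$ with $\mu$ is fixed in $t$ and each triangle is traversed by a chord of controlled length; summing these contributions bounds $E_{t}(\alpha)$, and with $i(\alpha,F_{t})=0$ one concludes $\ell_{h_{t}}(\alpha)\le E_{t}(\alpha)$ stays bounded. The point demanding care is exactly the uniform control of these entry--exit chords: one must check that they do not lengthen without bound as the shear coordinates drift.

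For the second case $\alpha\subseteq\lambda$ we again have $i(\alpha,\lambda)=0$, but now $\alpha$ runs along the support of the horocyclic foliation, and I would establish $\ell_{h_{t}}(\alpha)\to0$ by exhibiting the pinching directly: the horocyclic leaves supporting $\alpha$ are exponentially contracted by the positive stretch, so the geodesic representative of $\alpha$ is squeezed to zero length. A cleaner route, which I would prefer if it can be pushed through in the non-uniquely-ergodic situation, is to first prove $\ell_{h_{t}}(\lambda)\to0$ for the direction itself and then dominate, using $\ell_{h_{t}}(c\,\lambda)=c\,\ell_{h_{t}}(\lambda)$. The subtlety, and where I expect the bulk of the technical work to lie, is that $\alpha$ need only share the \emph{support} of $\lambda$, so the comparison $\alpha\le c\,\lambda$ of transverse measures has to be justified component by component on the minimal pieces of $\lambda$.
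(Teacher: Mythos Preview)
Your framework is exactly the paper's: the two--sided estimate
\[
i(\alpha,F_{\mu}(h_{t}))\ \le\ \ell_{h_{t}}(\alpha)\ \le\ i(\alpha,F_{\mu}(h_{t}))+L_{h_{t}}(\alpha^{\star})
\]
together with $i(\alpha,F_{\mu}(h_{t}))=e^{t}i(\alpha,F_{\mu}(h_{0}))$ is precisely Proposition~\ref{proposition:inequalities}, and your treatment of the first two cases matches the paper closely (for $\alpha\subset\lambda$ the paper also first handles $\alpha=\lambda_{\mu}(h_{0})$ via $\ell_{h_{t}}(F_{\mu}(h_{t}))=3|\chi(\Sigma)|$ and then dominates an arbitrary sublamination by a traintrack argument, which sidesteps your worry about ergodicity entirely: one just compares weights branch by branch).

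There is, however, a genuine gap in your third case. The statement asserts that $t\mapsto\ell_{h_{t}}(\alpha)$ is bounded \emph{in} $]0;+\infty[$, meaning bounded away from both $0$ and $+\infty$, and the paper's proof devotes real effort to the lower bound. You only argue the upper bound. The paper's lower bound is not a triviality: with $i(\alpha,\lambda)=0$ the leading term vanishes and one must rule out $\ell_{h_{t}}(\alpha)\to 0$ by other means. The paper does this by going back to a \emph{non-good} horogeodesic representative $\alpha^{\star}_{sp}$ (built rectangle by rectangle, with needles allowed) for which $I(F_{\mu}(h_{t}),\alpha^{\star}_{sp})>0$, and then arguing by contradiction that this quantity cannot tend to zero: if it did, the leaves of $\alpha^{\star}_{sp}$ would be forced into arbitrarily small neighbourhoods of singular leaves of $F$, which for large $t$ are close to concatenations of boundaries of non-foliated regions---and no geodesic can approximate such a curve. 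Your proposal does not address this half of the claim at all.

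On the upper bound in case three your sketch is in the right spirit but less sharp than the paper's. Rather than estimating entry--exit chords directly, the paper shows (Lemma~\ref{lemma:PositiveIntersection}) that when $\alpha$ is disjoint from $\lambda$ a good horogeodesic representative $\alpha^{\star}$ lies entirely in the singular graph $G_{F}$, and then observes (Lemma~\ref{lemma:asymptotic_length_horogeodesic}) that $t\mapsto L_{h_{t}}(\alpha^{\star})$ is \emph{decreasing}: under the stretch each horocyclic piece is pushed deeper into its spike, hence shortens. This gives the uniform control you flag as needing care, and it is cleaner than trying to bound chord lengths as the shear drifts.
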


\begin{remark}
One can find in \cite{Theret07} a similar description of the asymptotic behavior of the length of a measured geodesic lamination as one follows a stretch line in the \emph{negative} direction.
The statement of the theorem in that case is obtained by swapping the r\^oles of the support and of the direction.
However, the arguments used to establish these behaviors are rather different since a stretch line traversed in opposite direction is not necessarily a geodesic. 
\end{remark}

\begin{remark}
In \cite{Theret14}, I show that the length function of a measured geodesic lamination is convex along a stretch line when the line is parameterized by \emph{the logarithm} of the arc length.
This in particular implies, thanks to the above Theorem \ref{theorem:behavior} and to the similar theorem for the stretch lines followed in negative direction, that the length of a measured geodesic lamination which is contained in the support is strictly decreasing and that the length of a measured geodesic lamination which is disjoint from both the support and the direction is constant. 
\end{remark}

\section{On stretch lines}

We briefly recall in this section the definition of a stretch line.
The reader is warmly encouraged to read Thurston's original paper \cite{Thurston86} for further details.

In the whole paper, the hyperbolic metrics we shall consider on the surface $\Sigma$ will always tacitely be complete and of finite area. 
If $\mu$ denotes a geodesic lamination of $\Sigma$ and if $\Sigma$ is equipped with a hyperbolic metric, we shall also denote the geodesic representative of $\mu$ in its isotopy class, with respect to the metric, by the same letter $\mu$.
The \emph{measured} geodesic laminations will always be compact.

Fix a \emph{complete} geodesic lamination $\mu$ of the surface $\Sigma$, that is, a geodesic lamination whose complementary regions are interiors of ideal triangles (for any hyperbolic metric on $\Sigma$).
Fix also a point $h_{0}$ of $\mathcal{T}(\Sigma)$.
One associates to the lamination $\mu$ and the point $h_{0}$ of $\mathcal{T}(\Sigma)$ a measured lamination $\lambda_{\mu}(h_{0})$ of $\Sigma$ as follows.

Consider a representative $m_{0}$ of $h_{0}$, that is, a hyperbolic metric on $\Sigma$. 
Partially foliate each ideal triangle of $\Sigma\setminus\mu$ with arcs of horocycles centered at the vertices of the ideal triangle, in such a way that a region bounded by three arcs of length one meeting tangentially remains unfilled (see Figure \ref{Figure1}).
Each of the three \emph{closed} foliated parts lying over the three sides of the non-foliated region is called a \textbf{spike}. 
The three vertices of the non-foliated region are called the {\bf distinguished points}.  

\begin{figure}[!h]
\centering

\includegraphics[width = 5cm, height = 4cm]{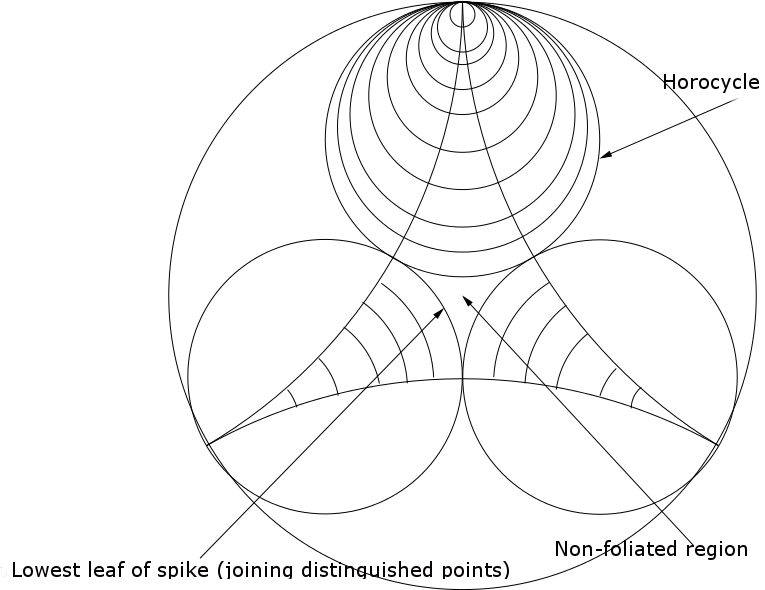}
\caption{The horocyclic foliation within each ideal triangle.}
\label{Figure1}
\end{figure}

This partial foliation defined in the complement of $\mu$ in $\Sigma$ extends continuously over $\mu$.
Equip this so-obtained partial foliation with the transverse measure that coincides with the element of length on the leaves of $\mu$. 
This partial measured foliation of $\Sigma$ is denoted by $F_{\mu}(m_{0})$ and is called the \textbf{horocyclic foliation} associated to $\mu$ and $m_{0}$.
As the hyperbolic metric $m_{0}$ varies in its isotopy class $h_{0}$, the horocyclic foliation $F_{\mu}(m_{0})$ varies in its measure class $F_{\mu}(h_{0})\in\mathcal{MF}(\Sigma)$. 
We thus have a map
$$
F_{\mu}\ :\ \mathcal{T}(\Sigma)\to\mathcal{MF}(\Sigma),\quad h_{0}\mapsto F_{\mu}(h_{0}).
$$
The measured geodesic lamination naturally associated to $F_{\mu}(h_{0})$ is denoted by $\lambda_{\mu}(h_{0})$ and is called the \textbf{horocyclic lamination} associated to $\mu$ and $h_{0}$.

Thurston proved that the above map $F_{\mu}$ is a homeomorphism onto its image, the latter being the space of the classes of the measured foliations transverse to $\mu$ and standard near the cusps of $\Sigma$.
The inverse image by $F_{\mu}$ of the ray $t\mapsto e^{t}F_{\mu}(h_{0})$, $t\in\mathbb{R}$, is the \textbf{stretch line} supported by $\mu$ and passing through $h_{0}\in\mathcal{T}(\Sigma)$.
Let $t\mapsto h_{t}$, $t\in\mathbb{R}$, denote this stretch line. 
It is endowed with the orientation induced by $\mathbb{R}$.
By definition, $h_{t}=F_{\mu}^{-1}(e^{t}F_{\mu}(h_{0}))$. 
Put in other words, the horocyclic lamination associated to $h_{t}$ is $e^{t}\lambda_{\mu}(h_{0})$, that is, the horocyclic lamination associated to $h_{0}$ but with the transverse measure multiplied by $e^{t}$. 
The \textbf{direction} of the stretch line is the projective class of the horocyclic lamination $\lambda_{\mu}(h_{0})$.
A stretch line is thus characterized by its support and its direction. 

The stretch lines form a particular class of geodesics in Teichm\"uller space equipped with Thurston's asymmetric metric $L$.

\begin{remark}
In Thurston's paper \cite{Thurston86}, a subclass of stretch lines is more particularly studied,
namely, the stretch lines whose supports are \emph{chain-recurrent}.
This topological condition prevents the spiralling of the leaves of the support around a closed leaf in the same direction.
We shall also assume that all supports are chain-recurrent.
\end{remark}

\section{Asymptotic behaviors of the lengths of the support and of the direction}

\subsection{Lengths of measured geodesic laminations, of measured partial foliations}

Consider a hyperbolic metric on the surface $\Sigma$ and a measured geodesic lamination $\alpha$ of $\Sigma$.
One way to define the length of $\alpha$ is to consider a particular fat traintrack approximation $\Theta$ of $\alpha$.
This particular traintrack is a regular neighborhood of $\alpha$, small enough to be of stable topological type, foliated by arcs transverse to the boundary called the \textbf{traverses} of $\Theta$.
It is obtained as follows.
Choose a small regular neighborhood of the boundary of each component of $\Sigma\setminus\alpha$.
The boundary of such a component is made up of finitely many (possibly none) spikes (asymptotic half-lines) and of finitely many (possibly none) simple closed geodesics.
The neighborhoods are foliated by arcs of horocycles in the vicinity of the spikes (just like the spikes of the ideal triangles of $\Sigma\setminus\mu$ for the horocyclic foliation).
These arcs are perpendicular to the boundary of the neighborhoods.
It is however impossible to foliate the whole neighborhoods with such arcs because two foliations about spikes do not merge correctly (hence the presence of non-foliated regions in the construction of the horocyclic foliation).
This gives rise to small ``compromise zones" inbetween, made up of arcs we still choose to be perpendicular to the boundary.   
When the neighborhoods have no spikes, they are collars about simple closed geodesics of $\alpha$. 
They are then foliated with geodesic arcs perpendicular to the boundary.
Piecing these neighborhoods together in $\Sigma$ yields a regular neighborhood $\Theta$ of $\alpha$ equipped with a foliation whose leaves are perpendicular to the leaves of $\alpha$ and to the boundary of $\Theta$.
These leaves are called the {\bf traverses} of $\Theta$.
The boundary of $\Theta$ is made up of smooth arcs and finitely many singular points forming cusps.
The traverses passing through the singular points are called the \textbf{singular traverses}.
The singular traverses cut the traintrack into finitely many rectangles $R_{1},\cdots,R_{n}$, 
each foliated by traverses and crossed by pieces of leaves of $\alpha$.
The nice property of this particular traintrack approximation of $\alpha$ is that,
in each rectangle $R_{i}$, the lengths of all the pieces of leaves of $\alpha$ are the same.
Denote this length by $\ell_{i}$.
Moreover, all the traverses of $R_{i}$ have the same transverse measure $m_{i}$.

By definition, the length of $\alpha\cap R_{i}$ is the sum of the lengths of the geodesic segments of $\alpha\cap R_{i}$ against the transverse measure $d\alpha$ of $\alpha$.
In formula, 
$$
\ell_{h_{0}}(\alpha)=\sum_{i=1}^{n}m_{i}\cdot\ell_{i},
$$
where $h_{0}\in\mathcal{T}(\Sigma)$ is the isotopy class of the underlying metric, 
since the length does not depend upon the representative of $h_{0}$.  
It can also be checked that this definition does not depend upon the choice of the traintrack approximation $\Theta$ of $\alpha$.

We therefore have a function $\ell(\alpha)$ defined over the Teichm\"uller space of $\Sigma$.
The length of a measured geodesic lamination $\alpha$ is the minimum of the lengths of the partial measured foliations $F$ isotopic to $\alpha$ with the transverse measures preserved. 
Actually, this property is true even if the partial ``foliation" $F$ has non-simple leaves or has leaves which intersect one another, as soon as each leaf of $F$ is homotopic to a leaf of $\alpha$ 
(see \cite{Theret14} for detail and proof).
 
\begin{proposition}[\textbf{Minimal lengths realized by geodesic laminations} \cite{Pap91}, \cite{Theret14}]
\label{proposition:min_length}
Let $\alpha$ be a measured geodesic lamination of $\Sigma$.
For any measured partial foliation $F$ of $\Sigma$ isotopic to $\alpha$ with the transverse measure preserved, 
one has
$$
\ell_{h}(\alpha)\leq\ell_{h}(F),\ \forall h\in\mathcal{T}(\Sigma),
$$
with equality if and only if $F$ is equal to $\alpha$. 
\end{proposition}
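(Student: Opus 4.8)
The plan is to reduce the inequality to the elementary fact that in a hyperbolic surface the geodesic representative is the unique shortest curve in its homotopy class, and then to integrate this pointwise statement against the transverse measure that $F$ and $\alpha$ share. Since $F$ is isotopic to $\alpha$ with its transverse measure preserved, there is a map $r\colon\Sigma\to\Sigma$, homotopic to the identity, that straightens $F$ onto $\alpha$ by carrying each leaf of $F$ to the geodesic leaf of $\alpha$ homotopic to it. The key point is that $r$ does not increase length along the leaves --- a geodesic is no longer than any arc homotopic to it --- and that it is compatible with the transverse measures, since an ambient homotopy cannot alter the measure class. Thus $r$ is the mechanism that turns a leafwise length inequality into the global inequality $\ell_{h}(\alpha)\le\ell_{h}(F)$.

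Concretely, I would work with the fat traintrack $\Theta$ and its rectangles $R_{1},\dots,R_{n}$ introduced above. Using the measure-preserving isotopy I may assume that $F$ is carried by the same $\Theta$, with its leaves crossing each $R_{i}$ in the same combinatorial pattern as the geodesic arcs of $\alpha$ and inducing the same transverse measure $m_{i}$ on the traverses. Within $R_{i}$ the two families of arcs are homotopic relative to the bounding singular traverses, so each $F$-arc is at least as long as the geodesic $\alpha$-arc of length $\ell_{i}$; writing $\ell_{i}^{F}$ for the corresponding $F$-length one obtains $\ell_{i}\le\ell_{i}^{F}$. Summing against the common weights gives
$$
\ell_{h}(\alpha)=\sum_{i=1}^{n}m_{i}\,\ell_{i}\ \le\ \sum_{i=1}^{n}m_{i}\,\ell_{i}^{F}=\ell_{h}(F).
$$
For the equality statement I would use that each weight $m_{i}$ is strictly positive, so equality in the display forces $\ell_{i}=\ell_{i}^{F}$ in every rectangle; by uniqueness of the geodesic in a homotopy class this means that the $F$-arcs already coincide with the geodesic arcs of $\alpha$, and reassembling the rectangles yields $F=\alpha$.

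The difficulty lies not in this idea but in its measure-theoretic execution, and it is concentrated in two places. First, the leaves of $\alpha$ are in general non-compact and may spiral onto closed leaves or run out into the cusps, so the straightening $r$ and the arc-by-arc comparison must be shown to be measurable and to integrate correctly; this is where I would exploit that the measured lamination is compact and that $F$ is standard near the cusps, and appeal to the detailed construction in \cite{Theret14}. Second, the endpoint bookkeeping inside each rectangle needs care: an $F$-arc and the corresponding $\alpha$-arc need not share their endpoints on the traverses, so the relation ``homotopic relative to the traverses'' must be upgraded to a genuine length comparison, for instance by first sliding the endpoints of the $F$-arcs onto those of the $\alpha$-arcs along the traverses at a cost that tends to zero as $\Theta$ is taken thinner. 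I expect this uniform control over the non-compact, measured family of leaves to be the main obstacle; the case of a weighted closed geodesic, where each leaf comparison is a single unambiguous statement about two homotopic closed curves, is the model that the general argument must reproduce in the measured setting.
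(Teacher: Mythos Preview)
The paper does not supply a proof of this proposition: it is stated with attributions to \cite{Pap91} and \cite{Theret14}, and the sentence immediately preceding it directs the reader to \cite{Theret14} ``for detail and proof''. There is therefore no in-paper argument to compare your proposal against.

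On the substance of your sketch, the idea is the right one, but the traintrack implementation has a genuine gap that you partially flag and that the limiting argument you propose does not actually close. Two points. First, the symbol $\ell_i^{F}$ is not a single number: the special feature of the perpendicular traintrack $\Theta$ is that all \emph{geodesic} arcs of $\alpha$ in $R_i$ have the same length $\ell_i$, but there is no reason the $F$-arcs in $R_i$ should share a common length, so the displayed sum for $\ell_h(F)$ must be an integral over the traverse, not $m_i\ell_i^{F}$. Second, and more seriously, the pointwise inequality ``each $F$-arc is at least as long as $\ell_i$'' does not follow from the fact that geodesics are shortest: a geodesic segment minimises length among arcs with the \emph{same} endpoints, not among arcs whose endpoints merely lie on the same pair of traverses. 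Sliding endpoints along traverses costs length proportional to the width of $\Theta$, but the number of rectangles grows as $\Theta$ is refined, and you give no reason why the accumulated error is $o(1)$; in fact it need not be without further control.

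The clean route, which is what the cited references use, bypasses the traintrack entirely. Lift to $\mathbb{H}^{2}$ and, for each leaf $\tilde f$ of $\tilde F$, let $\pi$ be the orthogonal projection onto the geodesic $\tilde\alpha$ with the same endpoints at infinity. In hyperbolic geometry $\pi$ is $1$-Lipschitz, strictly contracting off $\tilde\alpha$; hence arclength along $\tilde f$ dominates arclength along $\pi(\tilde f)=\tilde\alpha$. This leafwise inequality is measurable and equivariant, and since the transverse measures correspond under the isotopy, integrating gives $\ell_h(\alpha)\le\ell_h(F)$, with equality exactly when every leaf of $F$ already lies on its geodesic, i.e.\ $F=\alpha$. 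This avoids both the ill-defined $\ell_i^{F}$ and the endpoint bookkeeping.
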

\vskip .5cm

It turns out that the length of the horocyclic foliation $F_\mu(h)$ with respect to the metric $h$ is easily computed.\\

\begin{lemma}[\textbf{Length of horocyclic foliation} \cite{Pap91}]
\label{lemma:length_horocyclic}
Let $h$ be a point of Teichm\"uller space $\mathcal{T}(\Sigma)$ and let $\mu$ be a complete geodesic lamination of $\Sigma$.
The length of the horocyclic foliation $F_{\mu}(h)$ with respect to any representative of $h$ is equal to $3\vert\chi(\Sigma)\vert$.
\end{lemma}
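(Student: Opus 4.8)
The plan is to turn the computation of $\ell_h(F_\mu)$ into a purely local one inside the ideal triangles of $\Sigma\setminus\mu$ and then to assemble the pieces via Gauss--Bonnet. First I would use the description of length recalled above, $\ell_h(\alpha)=\sum_i m_i\ell_i$, applied to the measured foliation $F_\mu$: its length is the integral of the hyperbolic length of the (horocyclic) leaves against the transverse measure. Cutting $\Sigma$ along $\mu$ and observing that $F_\mu$ is supported on the ideal triangles (the central non-foliated regions carry no leaves), the length splits as a sum over the complementary triangles, $\ell_h(F_\mu)=\sum_T \ell_h(F_\mu\vert_T)$. The point that makes this decomposition clean is that every horocyclic leaf meets $\mu$ perpendicularly, so that along each side of a triangle the transverse measure of $F_\mu$ is exactly hyperbolic arc length; hence no correction term appears when the triangles are reglued along $\mu$.

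Next I would compute the contribution of a single ideal triangle. Since any two ideal triangles are isometric and $F_\mu$ is defined intrinsically from the metric, $\ell_h(F_\mu\vert_T)$ is a universal constant, independent of $T$ and of the chosen representative of $h$ (and the standard form of the foliation near the cusps of $\Sigma$ does not affect this local value). To evaluate it I would place $T$ in the upper half-plane as the triangle with vertices $0,1,\infty$, whose order-three symmetry cycles the three vertices, hence the three spikes; this reduces the computation to the single spike at $\infty$. There the leaves are the horizontal horocycles $y=c$, the length-one horocycle is $y=1$ (since $\int_0^1 dx/c=1/c$ equals $1$ at $c=1$), and a vertical side of $T$, being perpendicular to the leaves, is a transversal along which the transverse measure is $dc/c$. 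The spike's contribution is then the convergent horocyclic integral $\int_1^{\infty}\frac{1}{c}\,\frac{dc}{c}$, and the three isometric spikes together give the universal per-triangle constant.

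It remains to count the triangles and combine. By Gauss--Bonnet the hyperbolic area of $\Sigma$ is $2\pi\,\vert\chi(\Sigma)\vert$, and each ideal triangle has area $\pi$, so $\Sigma\setminus\mu$ consists of exactly $2\,\vert\chi(\Sigma)\vert$ ideal triangles. Assembling the three spikes over these triangles, with the normalization fixed by the length-one horocycles bounding the non-foliated region, yields $\ell_h(F_\mu)=3\,\vert\chi(\Sigma)\vert$. Independence of the representative of $h$ is then automatic, since every step is carried out for an arbitrary hyperbolic metric in the class and the outcome is a topological constant.

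The step I expect to be the main obstacle is pinning down the universal per-triangle constant with the correct normalization. One must track carefully how the transverse measure of $F_\mu$, defined to be arc length along $\mu$, is transported by holonomy into the interior of each spike and paired with the varying leaf-length, and then reconcile the resulting value with the length-one horocycles so that the total matches the stated $3\,\vert\chi(\Sigma)\vert$. The second point needing care is the locality claim itself, namely that cutting along $\mu$ and summing over the complementary triangles introduces no boundary contribution; here it is precisely the perpendicularity of the horocyclic leaves to $\mu$ that secures the additivity, so that the computation reduces to the single elementary spike integral above.
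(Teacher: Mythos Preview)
Your approach is essentially the paper's: cut along $\mu$, note that each ideal triangle contributes a universal constant which splits as three congruent spike integrals, compute one spike in the upper half-plane, and count the triangles with Gauss--Bonnet. The paper adds one streamlining observation you leave implicit: because the transverse measure of $F_\mu$ is hyperbolic arc length along the (geodesic) transversals and the horocyclic leaves meet those transversals orthogonally, $\ell_h(F_\mu)$ equals the hyperbolic \emph{area} of the foliated region, so one only needs the area of a spike rather than the paired integral of leaf-length against transverse measure. Your worries in the last paragraph are not real obstacles: the sides of each spike are themselves leaves of $\mu$, so no holonomy transport of the transverse measure is needed, and the perpendicularity you invoke is exactly what makes the length/area identification work.

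There is, however, an arithmetic inconsistency in your final assembly. You correctly obtain $2\,|\chi(\Sigma)|$ ideal triangles, hence $6\,|\chi(\Sigma)|$ spikes, and your spike integral evaluates to $\int_1^\infty c^{-2}\,dc = 1$; multiplying gives $6\,|\chi(\Sigma)|$, not the asserted $3\,|\chi(\Sigma)|$. (The paper's own proof contains a matching slip in the triangle/spike count; only finiteness and independence of $h$ are used downstream, so the exact constant is immaterial for the application.) You should carry the numbers through and record the constant your computation actually produces.
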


\begin{proof}
The transverse measure of the horocyclic foliation $F_{\mu}(h)$ coincides, 
when the surface $\Sigma$ is endowed with any representative of $h$, 
with the hyperbolic length element.
It suffices therefore to compute the \emph{area} of the subsurface foliated by $F_{\mu}(h)$.
The surfaces decomposes into $\frac{4}{3}\vert\chi(\Sigma)\vert$, each containing three isometric foliated spikes.
This area hence decomposes into the area of $4\vert\chi(\Sigma)\vert$ spikes.
The area of such a spike is easy to compute, in the upper-half plane model for instance.
By putting the ideal vertex of the spike at infinity and the boundary of the spike at the ordinate 1, the area is given by
$$
\int_{0}^{+\infty}e^{-x}dx = 1.
$$
This concludes the proof.
\end{proof}

\subsection{Asymptotic behavior of direction and support}

Consider a stretch line $t\mapsto h_{t}$, $t\in\mathbb{R}$, whose support is denoted by $\mu$ and its direction by $\lambda$.
This section is devoted to show the following lemma.

\begin{lemma}
\label{lemma:lambda_mu}
Let $t\mapsto h_{t}$, $t\in\mathbb{R}$, be a stretch line whose support is denoted by $\mu$ and its direction by $\lambda$.
Let $\alpha$ be a measured geodesic lamination of $\Sigma$.
\begin{enumerate}
\item If $\alpha$ is contained in $\mu$, then $\displaystyle{\lim_{t\to+\infty}}\ell_{h_{t}}(\alpha)=+\infty$.
\item If $\alpha$ is contained in $\lambda$, then $\displaystyle{\lim_{t\to+\infty}}\ell_{h_{t}}(\alpha)=0$.
\end{enumerate}
\end{lemma}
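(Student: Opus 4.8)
The plan is to treat the two cases by entirely separate mechanisms: the first by tracking how the hyperbolic length element along $\mu$ scales as one follows the stretch line, the second by combining Lemma~\ref{lemma:length_horocyclic} and Proposition~\ref{proposition:min_length} with the explicit $e^{t}$-scaling of the horocyclic lamination.

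For part (1), I would first observe that the transverse measure of the horocyclic foliation $F_{\mu}(h)$, when restricted to the leaves of $\mu$ (which are transverse to the horocyclic arcs), coincides by construction with the hyperbolic length element along $\mu$. Since $F_{\mu}(h_{t})=e^{t}F_{\mu}(h_{0})$ in $\mathcal{MF}(\Sigma)$, and since $F_{\mu}(h_{t})$ is at the same time the horocyclic foliation of the metric $h_{t}$, the $h_{t}$-length element along $\mu$ equals $e^{t}$ times the $h_{0}$-length element along $\mu$. As $\alpha$ is contained in $\mu$, integrating against the transverse measure $d\alpha$ yields $\ell_{h_{t}}(\alpha)=e^{t}\ell_{h_{0}}(\alpha)$. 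Because $\alpha$ is a nonzero measured geodesic lamination, $\ell_{h_{0}}(\alpha)>0$, and the claimed divergence to $+\infty$ follows at once.

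For part (2), I would start from the fact that the horocyclic lamination associated to $h_{t}$ is $e^{t}\lambda_{\mu}(h_{0})$ and that its associated partial foliation $F_{\mu}(h_{t})$ is isotopic to it with transverse measure preserved. Proposition~\ref{proposition:min_length} then gives $\ell_{h_{t}}(e^{t}\lambda_{\mu}(h_{0}))\leq\ell_{h_{t}}(F_{\mu}(h_{t}))$, while Lemma~\ref{lemma:length_horocyclic} gives $\ell_{h_{t}}(F_{\mu}(h_{t}))=3|\chi(\Sigma)|$. Using the homogeneity of length in the transverse measure, $\ell_{h_{t}}(\lambda_{\mu}(h_{0}))=e^{-t}\ell_{h_{t}}(e^{t}\lambda_{\mu}(h_{0}))\leq 3|\chi(\Sigma)|\,e^{-t}$, which tends to $0$. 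It then remains to pass from $\lambda_{\mu}(h_{0})$ to an arbitrary $\alpha$ contained in $\lambda$: since the support of $\alpha$ lies inside the support of $\lambda_{\mu}(h_{0})$, and the latter charges every branch of a train track carrying that support, there is a constant $C$ with $\alpha\leq C\,\lambda_{\mu}(h_{0})$ as transverse measures. Because the geodesic realizations share the same leaves, the length formula $\ell=\sum_{i}m_{i}\ell_{i}$ is monotone in the weights, so $\ell_{h_{t}}(\alpha)\leq C\,\ell_{h_{t}}(\lambda_{\mu}(h_{0}))\leq 3C|\chi(\Sigma)|\,e^{-t}\to 0$.

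The main obstacle I anticipate is the comparison step at the end of part (2): justifying that an $\alpha$ contained in $\lambda$ really is dominated by a fixed multiple of $\lambda_{\mu}(h_{0})$, and that length is monotone under such domination of transverse measures. The delicate point is that $\lambda_{\mu}(h_{0})$ need not be uniquely ergodic on its support, so $\alpha$ need not be proportional to it; one must argue at the level of branch weights that $\lambda_{\mu}(h_{0})$ assigns positive mass to every branch occupied by $\alpha$. By contrast, the length-element scaling in part (1) requires care only in verifying that the identification of the restricted transverse measure with the hyperbolic length element persists along the whole stretch line, which is immediate from the very definition of $h_{t}=F_{\mu}^{-1}(e^{t}F_{\mu}(h_{0}))$.
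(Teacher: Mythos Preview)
Your proposal is correct and matches the paper's proof essentially line for line: part~(1) via the $e^{t}$-scaling of arc length along $\mu$, and part~(2) via Proposition~\ref{proposition:min_length} and Lemma~\ref{lemma:length_horocyclic} for $\lambda_{\mu}(h_{0})$ followed by the train-track weight comparison $\ell_{h}(\alpha)\leq C\,\ell_{h}(\lambda_{\mu}(h_{0}))$. Your worry about positivity of the $\lambda_{\mu}(h_{0})$-weights is a non-issue once the train track $\Theta$ is taken to be an approximation of $\lambda_{\mu}(h_{0})$ itself (as the paper does), since then every branch $R_{i}$ carries positive $\lambda_{\mu}(h_{0})$-mass $m_{i}>0$ and $C=\max_{i} m'_{i}/m_{i}$ is well-defined and independent of the metric $h$.
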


\begin{remark}
The complete geodesic lamination $\mu$ does not necessarily carry a transverse measure but, if $\mu$ is not a finite union of infinite leaves going at both ends towards cusps of $\Sigma$ (in which case length has no meaning in our sense), there always exists
a compact non-empty sublamination of $\mu$ carrying a transverse measure. 
\end{remark}

\begin{proof}
We first establish Statement (1). 
Let $\alpha$ be a measured geodesic lamination contained in the support $\mu$ of the stretch line $t\mapsto h_{t}$ under consideration.
Since by definition stretching multiplies arc length along the leaves of $\mu$ by $e^{t}$, we have
$$
\ell_{h_{t}}(\alpha)=e^{t}\ell_{h_{0}}(\alpha).
$$
It follows that $\displaystyle{\lim_{t\to+\infty}}\ell_{h_{t}}(\alpha)=+\infty$.\\

We now establish Statement (2).
Suppose that $\alpha$ is a measured geodesic lamination contained in the direction $\lambda$ of the stretch line.
Consider the representative $\lambda_{\mu}(h_{0})$ of $\lambda$.
We first deal with the case $\alpha=\lambda_{\mu}(h_{0})$.
By Lemma \ref{lemma:length_horocyclic}, $\ell_{h_{t}}(F_{\mu}(h_{t}))=3\vert\chi(\Sigma)\vert$.

Now thanks to Proposition \ref{proposition:min_length}, we have

\begin{eqnarray*}
 \ell_{h_{t}}(\lambda_{\mu}(h_{0}))&\leq&\ell_{h_{t}}(F_{\mu}(h_{0}))\\
 &\leq&\ell_{h_{t}}(e^{-t}F_{\mu}(h_{t}))\\
 &\leq&e^{-t}\ell_{h_{t}}(F_{\mu}(h_{t}))\\
 &\leq&3\vert\chi(\Sigma)\vert e^{-t}.
\end{eqnarray*}
Hence,
$$
\lim_{t\to+\infty}\ell_{h_{t}}(\lambda_{\mu}(h_{0}))=0.
$$

Now let $\alpha$ be an arbitrary measured geodesic lamination contained in $\lambda_{\mu}(h_{0})$.
Let $h$ be an arbitrary point of $\mathcal{T}(\Sigma)$.
Choose a traintrack approximation $\Theta$ of $\lambda_{\mu}(h_{0})$ as above described, for some hyperbolic metric in $h$.
Let $R_{1},\cdots,R_{n}$ be the rectangles of $\Theta$ cut by the singular traverses and let $\ell_{i}$ and $m_{i}$ respectively be the lengths of the arcs of $\lambda_{\mu}(h_{0})\cap R_{i}$ with respect to $h$ and the transverse measure of the traverses of $R_{i}$ with respect to $\lambda_{\mu}(h_{0})$.
Since $\alpha$ is contained (as a set) in $\lambda_{\mu}(h_{0})$, the traintrack $\Theta$ is a traintrack carrying $\alpha$ and can be used to compute $\ell_{h_{0}}(\alpha)$.
Each rectangle $R_{i}$ is crossed by leaves of $\alpha$ (possibly none) and $\alpha\cap R_{i}\subset\lambda_{\mu}(h_{0})\cap R_{i}$, so these arcs have length $\ell_{i}$.
Let $m'_{i}$ be the transverse measure of the traverses of $R_{i}$ with respect to $\alpha$.
Set $c_{i}:=m'_{i}/m_{i}$ and $C:=\max\{c_{i}\!:\!1\leq i\leq n\}$.
We have
$$
\ell_{h}(\alpha)=\sum_{i=1}^{n}m'_{i}\cdot\ell_{i}=\sum_{i=1}^{n}c_{i}m_{i}\cdot\ell_{i}\leq C\sum_{i=1}^{n}m_{i}\cdot\ell_{i}=C\ell_{h}(\lambda_{\mu}(h_{0})).
$$
Note that $C$ does not depend upon the point $h$.
Since $\displaystyle{\lim_{t\to+\infty}}\ell_{h_{t}}(\lambda_{\mu}(h_{0}))=0$, we obtain 
$$
\lim_{t\to+\infty}\ell_{h_{t}}(\alpha)=0.
$$
This concludes the proof of the lemma.
\end{proof}

\section{Horogeodesic laminations}
\subsection{Horogeodesic curves}
Assume the surface $\Sigma$ to be endowed with some hyperbolic metric whose isotopy class is denoted by $h$.
Let $\mu$ be a complete geodesic lamination of $\Sigma$.
The lamination $\mu$ and the horocyclic foliation $F_{\mu}(h)$ form a kind of grid over the surface $\Sigma$.
Given a measured geodesic lamination $\alpha$ of $\Sigma$, 
we would like to put $\alpha$ into minimal position with respect to that grid.
The idea is to replace every leaf of $\alpha$ by a curve which follows leaves of $\mu$ and of $F_{\mu}(h)$ in the most efficient way,
that is, by minimizing the intersection between the leaves of $\alpha$ and the leaves of $\mu$ and of $F_{\mu}(h)$. 
The object we obtain after this replacement will be called a horogeodesic lamination.

It is better for this purpose to work in the universal covering of $\Sigma$ which, 
since all hyperbolic metrics are complete, 
is identified with the hyperbolic plane $\mathbb{H}^{2}$.
Let $\mu$ also denote the preimage in $\mathbb{H}^{2}$ of the complete geodesic lamination $\mu$ of $\Sigma$.
Let $F$ denote the preimage of the horocyclic foliation $F_{\mu}(h)$.
A curve $t\mapsto\gamma(t)$ in $\mathbb{H}^{2}$ is \textbf{horogeodesic} (with respect to $\mu$ and $F$) if it 
is a concatenation of curves alternately contained in a leaf of $\mu$ and in a leaf of $F$.
The maximal pieces (in the sense of inclusion) of the image of $\gamma$ contained in $\mu$ will be called the \textbf{geodesic pieces} and the maximal pieces of the image of $\gamma$ contained in a leaf of $F$ will be called the \textbf{horocyclic pieces} (whence the name horogeodesic).
A curve $t\mapsto\gamma(t)$ \textbf{avoids backtracking} with respect to $\mu$ if, once it enters inside an ideal triangle of $\mathbb{H}^{2}\setminus\mu$, it eventually exits this triangle (that is, it eventually enters inside \emph{another} ideal triangle) and does not come back to this triangle ever again (that is, the intersection of the image of $\gamma$ with the interior of each ideal triangle of $\mu$ has exactly one connected component).
A curve $t\mapsto\gamma(t)$ \textbf{avoids backtracking} with respect to $F$ if it does not bound with a leaf of $F$ a disc in $\overline{\mathbb{H}^{2}}=\mathbb{H}^{2}\cup S_{\infty}$ (which necessarily does not contain any non-foliated region.).
There are two limit cases which are considered as backtracking with respect to $F$:
1) when the disc is shrunk to a single geodesic segment, forming a {\bf needle}.
2) when the disc is infinite, bordered by two parallel leaves contained in $F$, ending at the same point at infinity.
We shall say that a horogeodesic curve $t\mapsto\gamma(t)$ is \textbf{good} if it avoids backtracking with respect to both $\mu$ and $F$.
In what follows, all horogeodesic curves will be parameterized by arc length.

\begin{lemma}
\label{lemma:endpoints}
The image of a good horogeodesic curve is embedded.
Moreover, the image of such a horogeodesic curve defined over $\mathbb{R}$ converges at both ends towards points of the circle at infinity. 
\end{lemma}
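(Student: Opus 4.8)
The plan is to carry out everything in the universal cover $\mathbb{H}^{2}$, using three structural facts: distinct leaves of $\mu$ are disjoint geodesics, each of which separates $\mathbb{H}^{2}$; distinct leaves of $F$ are disjoint embedded leaves transverse to $\mu$; and, within each family, an individual geodesic or horocyclic piece is an embedded arc traversed once. Granting these, two sub-arcs of $\gamma$ lying in distinct leaves of the same family are automatically disjoint, so the only conceivable self-intersections are (i) between two sub-arcs lying in a common leaf, and (ii) between a geodesic piece and a horocyclic piece, which can meet only where a leaf of $F$ crosses a leaf of $\mu$. I would first dispose of these by reformulating the two hypotheses as statements about how $\gamma$ meets a single leaf.

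First I would show, using avoidance of backtracking with respect to $\mu$, that $\gamma$ crosses each leaf $e$ of $\mu$ at most once. Indeed, every crossing of $e$ takes $\gamma$ between the two ideal triangles adjacent to $e$, so a second crossing would force a second visit to one of them, contradicting that $\gamma$ meets each triangle interior in a single connected component; the same observation rules out a geodesic piece that runs along $e$ and then returns to the side it came from. Hence $\gamma\cap e$ is connected, being a single transverse point or a single geodesic subarc, which kills both (ii) and the $\mu$-part of (i). The needle and infinite-parallel-leaf configurations excluded by avoidance of backtracking with respect to $F$ play the dual role for the other family: they prevent $\gamma$ from leaving a leaf of $F$ and later returning to the same arc of it, so that $\gamma$ traverses each leaf of $F$ without overlap. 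Combining these, no point is covered twice and $\gamma$ is embedded.

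For the behaviour at the ends I would set up a separation argument. Let $e_{1},e_{2},\dots$ be the successive leaves of $\mu$ crossed by $\gamma$ as $t\to+\infty$ and let $H_{k}$ be the closed half-plane bounded by $e_{k}$ lying on the forward side of the crossing. Since $\gamma$ cannot recross $e_{k}$, and since the $e_{k}$ are pairwise disjoint, the half-planes are nested, $H_{1}\supset H_{2}\supset\cdots$, and $\gamma(t)$ is eventually contained in every $H_{k}$. The remaining and, I expect, decisive point is to show that the nested boundary arcs $H_{k}\cap S_{\infty}$ shrink to a single point; equivalently, that $\gamma$ neither fails to leave every compact set nor oscillates inside a limiting half-plane. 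The danger is precisely that the crossed leaves $e_{k}$ accumulate onto a limiting leaf while $\gamma$ spirals alongside it, and this is the scenario that the infinite-parallel-leaf clause in the definition of backtracking with respect to $F$, together with the standing assumption that $\mu$ is chain-recurrent, are designed to exclude. Once spiralling is ruled out, the diameters of the $H_{k}$ at infinity tend to zero, the intersection collapses to a single boundary point, and $\gamma(t)$ converges to it. The degenerate possibility that $\gamma$ is eventually a single geodesic or a single horocyclic piece is treated separately and directly, a geodesic ray converging to an endpoint of its leaf and a horocyclic ray to the centre of its horocycle. The identical argument run for $t\to-\infty$, the two hypotheses being symmetric under reversal of orientation, yields convergence at the other end.
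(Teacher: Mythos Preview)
Your convergence argument follows the same skeleton as the paper's: the ideal triangles visited by $\gamma$ are nested, giving a decreasing family of half-planes whose intersection should be a single boundary point. The paper's proof is extremely brief; it does not address embeddedness at all, and for convergence it simply asserts that ``the Euclidean diameter of the half-spaces decrease to zero as they converge to infinity'' and concludes. You are considerably more careful: you treat embeddedness explicitly, and you correctly isolate the shrinking of the half-planes as the substantive step, noting that one must exclude the scenario in which the exit sides $e_k$ accumulate on a limiting leaf while $\gamma$ spirals alongside it. The paper glosses over exactly this point. Your proposed mechanism for ruling it out---chain-recurrence of $\mu$ together with the infinite-parallel-leaf clause---goes beyond anything the paper invokes (its proof mentions neither), and while plausible it remains a sketch; in particular it is not clear that chain-recurrence is really what is doing the work, as opposed to the transversality of $F$ to $\mu$, which forces a horocyclic piece approaching a limit leaf to actually cross it in finite time rather than spiral.

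One small correction to your embeddedness argument: you write that a crossing of a leaf $e$ of $\mu$ takes $\gamma$ between ``the two ideal triangles adjacent to $e$'', but a non-isolated leaf of $\mu$ need not bound any triangle on one or both sides. The no-backtracking hypothesis is stated in terms of triangles, not individual leaves, so the separation argument should be run with the half-spaces bounded by \emph{sides of triangles} (exactly as the paper does), from which the statement about arbitrary leaves then follows.
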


\begin{proof}
Each ideal triangle of $\mathbb{H}^{2}\setminus\mu$ boards three half-spaces.
The ideal triangles of $\mathbb{H}^{2}\setminus\mu$ are nested, which means that two of them are disjoint (and, as a matter of fact, are contained in one of the three half-spaces of the other).
Furthermore, the Euclidean diameter of the half-spaces decrease to zero as they converge to infinity.
It follows that a non-backtracking horogeodesic curve has a limit point on the circle at infinity for both of its ends.
\end{proof}

Good horogeodesic curves can thus be parameterized isometrically and, from now on, they all will be tacitely parametrized this way.

A curve in $\Sigma$ is \textbf{essential} if it is not homotopic to a cusp or a point.
A curve in $\mathbb{H}^2$ is \textbf{essential} if it is the lift of an essential curve of $\Sigma$.

\begin{lemma}
\label{lemma:endpoints}
A good essential horogeodesic curve defined over $\mathbb{R}$ has two different endpoints on the circle at infinity.
\end{lemma}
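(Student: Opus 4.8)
The plan is to argue by contradiction: suppose the good essential horogeodesic curve $\gamma:\mathbb{R}\to\mathbb{H}^2$ has both of its ends converging to a single point $\xi\in S_{\infty}$, and derive that its projection $\bar\gamma$ to $\Sigma$ is homotopic to a cusp or to a point, contradicting essentiality. First I would invoke the preceding lemma, which guarantees both that $\gamma$ is embedded and that it converges to well-defined endpoints at infinity. Under the contradiction hypothesis these endpoints coincide, so $\gamma(\mathbb{R})\cup\{\xi\}$ is a Jordan curve in the closed disc $\overline{\mathbb{H}^2}$ meeting $S_{\infty}$ only at $\xi$; it therefore bounds a topological disc $D\subset\overline{\mathbb{H}^2}$ with $D\cap S_{\infty}=\{\xi\}$. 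Since $\Sigma$ has finite area, its limit set is all of $S_{\infty}$ and every boundary point is either parabolic or conical, which splits the argument into two cases.

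If $\xi$ is parabolic, it is the fixed point of a cusp, and the two tails of $\gamma$ each eventually enter every horoball based at $\xi$. Projecting to $\Sigma$, both ends of $\bar\gamma$ run into the same cusp, so $\bar\gamma$ is homotopic to that cusp, contradicting the hypothesis that $\gamma$ is essential. I would spell this out by fixing one horoball $B$ at $\xi$ whose image is an embedded cusp neighborhood, noting that both tails lie in $B$ past some parameter, and pushing the bounded middle part of $\bar\gamma$ into the cusp across the disc $D$.

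The main obstacle is the conical case, where $\xi$ is not parabolic. Here I would exploit the structure used in the previous lemma: there is a nested sequence of half-planes $H_{1}\supset H_{2}\supset\cdots$, each bounded by a leaf $g_{n}$ of $\mu$, with $\mathrm{diam}(H_{n})\to 0$ and $\bigcap_{n}\overline{H_{n}}=\{\xi\}$, and both tails of $\gamma$ eventually lie in every $H_{n}$. Because leaves of $\mu$ do not cross one another and the geodesic pieces of $\gamma$ lie in leaves of $\mu$, the curve can cross each boundary geodesic $g_{n}$ only along horocyclic pieces, that is, along leaves of $F$; thus on each end $\gamma$ meets $g_{n}$ through a horocyclic arc, producing two families of leaves of $F$ that enter $H_{n}$ and squeeze toward $\xi$ as $n\to\infty$. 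The aim is to show that these two families, together with the embedded tails between them, eventually bound a disc with a leaf of $F$, or in the limit degenerate to two parallel leaves of $F$ ending at the common point $\xi$. Either configuration is precisely a backtracking with respect to $F$ (the second being exactly limit case~2 in the definition), contradicting that $\gamma$ is good.

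To make this limiting argument rigorous I would use that $\xi$ is a conical limit point: there exist deck transformations $g_{k}$ carrying $H_{n_{k}}$ across a fixed compact region $K$, so that the translated arcs $g_{k}(\gamma\cap H_{n_{k}})$ subconverge in $K$ to a horogeodesic configuration with two strands asymptotic to a common endpoint. I expect the delicate point to be controlling the horocyclic pieces under these translations and verifying that the limiting object is genuinely a forbidden $F$-disc rather than a legitimate non-backtracking passage; handling the possibility that $\bar\gamma$ is non-simple, so that translates of $D$ may overlap $D$, is where the real care is needed. Once the forbidden backtracking is exhibited, the contradiction with goodness closes the conical case, and together with the parabolic case this forces the two endpoints of $\gamma$ to be distinct.
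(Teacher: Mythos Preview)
Your overall strategy---assume a single endpoint $\xi$, then split on whether $\xi$ is parabolic or not---matches the paper's, and your parabolic case is essentially the same. The non-parabolic case, however, is handled very differently, and the paper's route is considerably shorter and avoids the compactness/limiting argument you set up.

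The paper first observes something you do not use: since the tail of $\gamma$ converges to $p=\xi$, it eventually runs through a sequence of ideal triangles of $\mathbb{H}^2\setminus\mu$ all sharing the ideal vertex $p$; these project to the finitely many ideal triangles of $\Sigma$, so a deck transformation $g$ must permute this fan and therefore fix $p$. This immediately forces $p$ to be a fixed point of $g$, either parabolic or hyperbolic. In the hyperbolic case the paper then invokes the standing hypothesis that $\mu$ is \emph{chain-recurrent}: the triangles spiralling toward $p$ along the axis $\alpha$ of $g$ lie entirely on one side of $\alpha$, while those spiralling toward the other endpoint lie on the other side. Hence both tails of $\gamma$ are trapped on the same side of $\alpha$ and traverse the \emph{same} sequence of ideal triangles, which is a backtracking with respect to $\mu$---contradiction. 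No limiting argument and no $F$-backtracking are needed.

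Your plan instead tries to manufacture a backtracking with respect to $F$ by translating pieces of $\gamma$ back via conical-limit deck transformations and passing to a subsequential limit. This may be completable, but you correctly flag the delicate step: you must show that the limiting configuration is a genuine $F$-disc (or the parallel-leaves limit case) and then pull that conclusion back to $\gamma$ itself, not just to its translates. You also never use chain-recurrence, which is exactly the structural tool the paper exploits to sidestep all of this. In short, your outline is plausible but incomplete in the hard case, whereas the paper's chain-recurrence argument closes it in a few lines.
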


\begin{proof}
Horogeodesic curves homotopic to a point in $\Sigma$ are homotopic to a point in $\mathbb{H}^2$ and conversely, so they necessarily backtrack.

A curve which is homotopic to a cusp has only one endpoint on the circle at infinity (the fixed point of the parabolic deck transformation defining the cusp).
Conversely, suppose that the good horogeodesic curve $\gamma$ has only one endpoint on the circle at infinity. 
Necessarily, in both directions, this curve gets trapped in some sequence of ideal triangles all with a common endpoint, which is the endpoint $p$ of $\gamma$.
Such a sequence of ideal triangles is the preimage of finitely many ideal triangles of $\Sigma$ under a deck transformation and the endpoint $p$ is a fixed point of this deck transformation $g$. 
If this deck transformation is hyperbolic, $p$ is one endpoint of the geodesic axis of $g$.
In this case, the ideal triangles spiral along this geodesic axis $\alpha$ (which is a closed geodesic in $\Sigma$).
Since $\mu$ is chain-recurrent, there is a neighborhood of $\alpha$ made up of ideal triangles with a common vertex at $p$ all lying on one side of $\alpha$ and ideal triangles with a common vertex at the other endpoint $q$ of $\alpha$ all lying on the other side of $\alpha$. 
Therefore, the curve $\gamma$ must lie on the same side of $\alpha$ and both its ends cross the same sequence of ideal triangles.
This implies that $\gamma$ necessarily backtracks.
We thus conclude that the point $p$ is the fixed point of a parabolic deck transformation and that $\gamma$ is homotopic to a cusp.
The proof is over.
\end{proof}

Homotopies in $\mathbb{H}^{2}$ are understood relative to the circle at infinity.
Lemma \ref{lemma:endpoints} implies that a bi-infinite essential good horogeodesic curve is homotopic to a geodesic line in $\mathbb{H}^{2}$.
In what follows, we shall stick to essential curves only.
We shall see that, conversely, a geodesic can be turned into a horogeodesic curve.

Consider now a bi-infinite horogeodesic curve $\gamma$ which does not backtrack with respect to $\mu$.
We also assume that $\gamma$ is not a leaf of $\mu$.
We want to homotope $\gamma$ to a good horogeodesic curve, that is, we want to erase all backtrackings of $\gamma$ with respect to $F$.
To do so, we introduce some moves we call \textbf{pushes}. 
These moves consist in replacing a horocyclic segment by another one with endpoints on the same leaves of $\mu$.
The horocylic segment might be reduced to a point and used to erase needles on horocyclic curves; see Figure \ref{Figure2}.

\begin{figure}[!h]
\centering
\includegraphics[width = 10cm, height = 3cm]{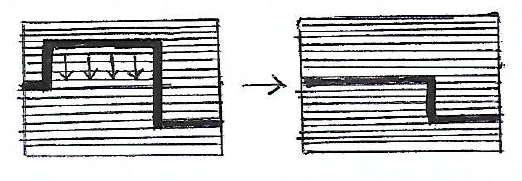}
\caption{Push.}
\label{Figure2}
\end{figure}

The horogeodesic curve $\gamma$ crosses countably many spikes $\left(S_i\right)_{i\in I}$ and crosses (possibly uncountably many) leaves of $\mu$.
Since $\gamma$ does not backtrack with respect to $\mu$, $\gamma\cap \mathring{S}_i$ consists in one connected component precisely.
Using pushes, we can homotope $\gamma$ so as to erase all needles, if any.
Let $N(\gamma)$ be the union of $\mu_\gamma \cup \bigcup_{i\in I} S_i$, where $\mu_{\gamma}$ denotes the union of the leaves of $\mu$ crossed by $\gamma$.
The set $N(\gamma)$ is a closed subset of $\mathbb{H}^2$ foliated by the restriction of $F$. 
We also denote this foliation of $N(\gamma)$ by $F$.
Furthermore, $\gamma\subset N(\gamma)$.
See Figure \ref{Figure3}

\begin{figure}[!h]
\centering
\includegraphics[width = 8cm, height = 4cm]{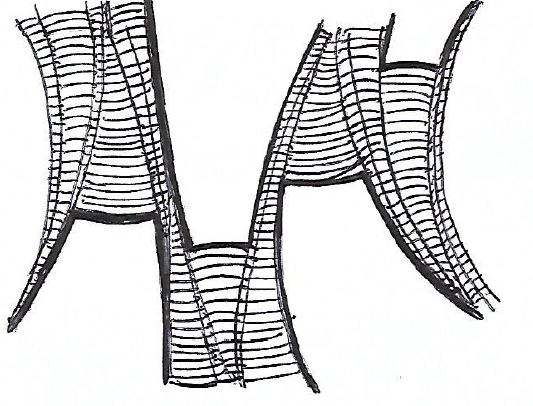}
\caption{Picture of $N(\gamma)$.}
\label{Figure3}
\end{figure}

Let us have a closer look to this set $N(\gamma)$, forgetting about $\gamma$ for a moment.

We say that spike $S_i$ is {\bf linked} to spike $S_j$ whenever there exists a leaf of $F$ passing through $S_i$ and $S_j$;
otherwise we say $S_i$ and $S_j$ are {\bf unlinked}.
See Figure \ref{Figure4}.

\begin{figure}[!h]
\centering
\includegraphics[width = 9cm, height = 4cm]{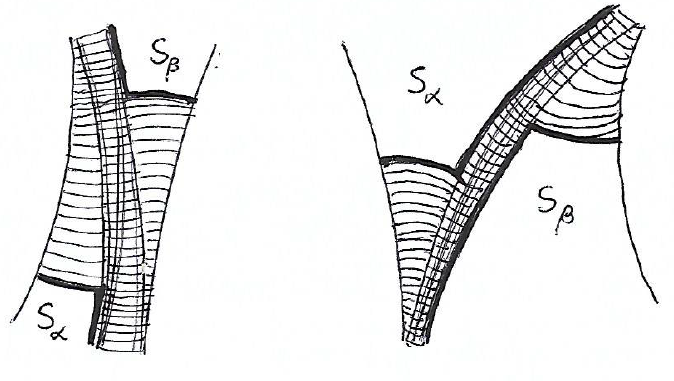}
\caption{Linked and unlinked spikes $S_i, S_j$.}
\label{Figure4}
\end{figure}

We group linked spikes together, which means that any two spikes in such a group are linked, while any spike in a group is unlinked to any spike in another group. 
Let us check the consistency of this grouping linked spikes.
First notice then that all spikes lying between $S$ and $S'$ belong to the same group, since a leaf connecting $S$ to $S'$ necessarily passes through all spikes inbetween.
Therefore spike $S$ belongs to at least one group and at most two groups:
there should be two groups if and only if there exist two spikes $S'$ and $S''$ separated by $S$ which are linked to $S$ but unlinked one to the other.
This implies that our decomposition into groups of linked spikes is well-defined.

We obtain a decomposition of $N(\gamma)$ into subsets $N_k$, $k\in K$, each of which being the union of linked spikes with the union of the leaves of $\mu$ sandwitched by these linked spikes. 
By definition, for each $k\in K$ there exists a leaf of $F$ that crosses all spikes in $N_k$ and no others.
So $N(\gamma) = \bigcup_{k\in K} N_k$.
In general, the interiors $\mathring{N}_k$ are not disjoint, since a spike might be linked to two unlinked spikes, as discussed above.
We decompose $N(\gamma)$ further so as to get a union with disjoint interiors $N(\gamma) = \bigcup_{l\in L} M_l$ (by singling out union of linked spikes that belong to two adjacent groups).
This way, $M_l\cap M_{l'}$, $l\neq l'$, is either empty or consists in one leaf of $\mu$.
Each $M_l$ contains linked spikes so, for each $M_l$ there exists a leaf $\alpha_l$ of $F$ which crosses all the spikes of $M_l$.

Using pushes we can now replace $\gamma$ by another horogeodesic curve $\gamma'$ so that $\gamma'\cap M_l = \alpha_l$, for all $l\in L$.
The geodesic segments of $\gamma'$ are therefore contained in the leaves of $\mu$ that separate the $M_l$'s.
See Figure \ref{Figure5}.

\begin{figure}[!h]
\centering
\includegraphics[width = 8cm, height = 4cm]{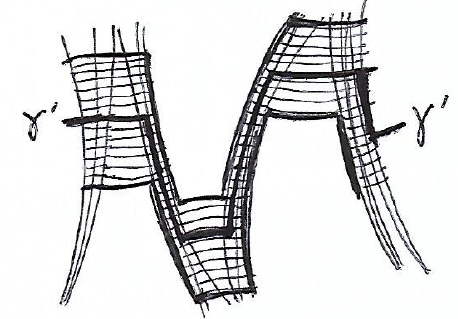}
\caption{The curve $\gamma'$. (The geodesic pieces have been drawn a bif off from the boundary of $N(\gamma)$ where they should be)}
\label{Figure5}
\end{figure}

Now we simplify the set $N(\gamma)$ into $R(\gamma)$ as follows.
For each $l\in L$, let $R_l$ be the subset of $M_l$ made up of the leaves of $F$ that cross all spikes of $M_l$ (the leaf $\alpha_l$ is one of these leaves). 
Each set $R_l$ is a foliated rectangle. 
Notice that both horocyclic sides of $R_l$ contain a side of a non-foliated region of some ideal triangle.
We set $R(\gamma) = \bigcup_{l\in L} R_l$.
This set is a closed subset of $\mathbb{H}^2$ for which all unnecessary pieces of $\mu$ and of the spikes of $N(\gamma)$ have been discarded.
The curve $\gamma'$ is contained in $R(\gamma)$.

Now the only possible way for $\gamma'$ for backtracking with respect to $F$ consists in discs contained in some $R_l$.
Using pushes in $R_l$, these backtrackings can be erased, and the thus-obtained horogeodesic curve is good.  
We thus have proved the following result.

\begin{lemma} 
\label{lemma:PushToGood}
A horogeodesic curve which avoids backtracking with respect to the lamination $\mu$ can be turned into a good horogeodesic curve by the use of pushes.
\end{lemma}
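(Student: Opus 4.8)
The plan is to show that the chain of operations already assembled in the discussion preceding the statement—removing needles, forming the foliated set $N(\gamma)$, organising its spikes into groups of linked spikes, pushing $\gamma$ onto a canonical representative $\gamma'$, and finally pushing inside the rectangles $R_l$—terminates in a good horogeodesic curve, and that every curve produced along the way stays horogeodesic and free of backtracking with respect to $\mu$. The observation that makes this bookkeeping tractable is that a push, by its very definition, exchanges a horocyclic segment only for another horocyclic segment with the \emph{same} endpoint leaves of $\mu$. Hence no push alters the collection $\mu_{\gamma}$ of leaves of $\mu$ crossed by the curve, nor the order in which they are crossed; in particular a push can never create a backtracking with respect to $\mu$, so throughout the argument it suffices to control backtracking with respect to $F$ alone.

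First I would dispose of the trivial case in which $\gamma$ is itself a leaf of $\mu$: such a curve has no horocyclic pieces and is vacuously good. Assuming $\gamma$ is not a leaf of $\mu$, I would begin by using pushes to collapse every needle, as in Figure \ref{Figure2} (this degenerate push, shrinking the horocyclic segment to a point, is exactly the tool introduced for that purpose). With the needles gone I would form the closed foliated set $N(\gamma)$ and carry out its decomposition into the blocks $M_l$, $l\in L$, whose interiors are pairwise disjoint and which meet along single leaves of $\mu$; this relies on the already-verified consistency of the grouping of linked spikes, namely that each spike lies in at most two adjacent groups. In each block $M_l$ a leaf $\alpha_l$ of $F$ crosses all the spikes of $M_l$, and a single further push then replaces $\gamma$ by the curve $\gamma'$ characterised by $\gamma'\cap M_l=\alpha_l$ for every $l$, with the geodesic pieces of $\gamma'$ lying on the leaves of $\mu$ that separate consecutive blocks.

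The heart of the argument—and the step I expect to be the main obstacle—is the claim that, once $\gamma$ has been replaced by $\gamma'$, any remaining backtracking with respect to $F$ is realised by a disc contained in a single rectangle $R_l$. I would establish this by passing to the simplified set $R(\gamma)=\bigcup_{l\in L}R_l$, in which all superfluous pieces of $\mu$ and of the spikes have been discarded and both horocyclic sides of each $R_l$ abut a side of a non-foliated region. The crucial structural point is that a leaf of $F$ cannot cross one of the leaves of $\mu$ separating two consecutive blocks $M_l$ and $M_{l+1}$ while remaining in the foliated set: to do so it would have to enter spikes on both sides of that separating leaf, thereby linking spikes belonging to distinct, hence unlinked, groups, contradicting the definition of the grouping. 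Consequently a leaf of $F$ that meets $\gamma'$ does so inside a single rectangle, and a backtracking disc—bounded by an arc of $\gamma'$ and an arc of such a leaf, and containing no non-foliated region—can neither straddle a geodesic piece of $\gamma'$ nor span two different rectangles; it is therefore confined to the interior of one $R_l$.

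Since each $R_l$ is a foliated rectangle, a push performed within $R_l$ erases the backtracking disc there without creating a new one and without disturbing $\gamma'$ outside $R_l$; because it moves only a horocyclic segment across leaves of $\mu$ interior to $M_l$, it also preserves the absence of $\mu$-backtracking. The delicate point to make fully rigorous is the behaviour of this last family of pushes when the index set $L$ is infinite, which I would handle by noting that the pushes in distinct rectangles have pairwise disjoint supports and may therefore be carried out simultaneously, yielding in one step the desired good horogeodesic curve.
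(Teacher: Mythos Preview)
Your proposal is correct and follows essentially the same route as the paper: the lemma is stated in the paper as the conclusion of the preceding discussion (``We thus have proved the following result''), and you have accurately reconstructed that discussion---needle removal, the decomposition of $N(\gamma)$ into the blocks $M_l$, the push to $\gamma'$, the passage to $R(\gamma)$, and the final pushes inside each $R_l$. You in fact supply more justification than the paper does at two points the paper leaves implicit: the argument that a backtracking disc for $\gamma'$ must be confined to a single rectangle $R_l$ (via the observation that a leaf of $F$ crossing a separating leaf of $\mu$ would link spikes from unlinked groups), and the remark that the final family of pushes can be performed simultaneously when $L$ is infinite because their supports are disjoint.
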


\begin{remark}
The set $N(\gamma)$ contains all horogeodesic curves which avoid backtracking with respect to $\mu$ and that are homotopic to $\gamma$. 
All these curves have the same endpoints on the circle at infinity.
They are all homotopic through pushes.
Also note that if the curve $\gamma$ is homotopic to a (possibly singular) leaf $f$ of $F$, then pushes homotope it to that leaf $f$.
\end{remark}

\subsection{Stairstep curves}

We now look closer at a special class of horogeodesic curves we call stairstep curves.
We first define shift rectangles and shift segments.

A rectangle is a closed disc with four points on the boundary, called the vertices of the rectangle, cutting the boundary into four curves called the sides of the rectangle. 
We allow "flat" rectangles as well, which are just closed segments.
A \textbf{shift rectangle} (for the horocyclic foliation $F$ and the complete geodesic lamination $\mu$) is a rectangle with two opposite sides contained in leaves of $F$, with the two other sides contained in leaves of $\mu$ and such that two opposite vertices are distinguished points and no other distinguished points belong to that rectangle. 
(A flat shift rectangle is just a segment either contained in a leaf of $F$ or of $\mu$ and joining two distinguished points.)
A \textbf{shift segment} is a curve transverse to the foliation $F$ and homotopic, through a homotopy preserving leaves of $F$, to a geodesic side of a shift rectangle.

We now define stairstep curves. 
Recall that a horogeodesic curve is a concatenation of alternately horocyclic and geodesic pieces, respectively contained in leaves of $F$ and of $\mu$.
A \textbf{stairstep curve} is a horogeodesic curve not contained in $\mu$ all of whose geodesic pieces are shift segments.

As the following lemma asserts it, stairstep curves are just, up to pushes, good horogeodesic curves.

\begin{lemma}
\label{lemma:stairstep_good}
Any bi-infinite horogeodesic curve $\gamma$ which avoids backtracking with respect to $\mu$ is homotopic to a stairstep curve $\gamma^{\star}$ through pushes. 
\end{lemma}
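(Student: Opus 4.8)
The plan is to show that a horogeodesic curve $\gamma$ avoiding backtracking with respect to $\mu$ can be homotoped through pushes to a curve $\gamma^{\star}$ whose every geodesic piece is a shift segment, and that this $\gamma^{\star}$ is itself horogeodesic and non-backtracking. The natural approach is to reuse the machinery already developed: by Lemma \ref{lemma:PushToGood}, $\gamma$ is homotopic through pushes to a \emph{good} horogeodesic curve, so I may assume from the start that $\gamma$ itself is good. The task then reduces to analyzing the geodesic pieces of a good horogeodesic curve and recognizing them, after suitable further pushes, as concatenations of shift segments.

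First I would examine a single geodesic piece $g$ of $\gamma$, which lies in some leaf $\ell$ of $\mu$ and joins the endpoint of an incoming horocyclic piece to the starting point of an outgoing horocyclic piece. The key geometric input is the structure of $R(\gamma)=\bigcup_l R_l$ built just above the statement: the geodesic pieces of the pushed curve $\gamma'$ lie in the leaves of $\mu$ separating adjacent rectangles $R_l$, and each horocyclic side of $R_l$ contains a side of a non-foliated region, hence abuts a distinguished point. I would argue that along the leaf $\ell$, the geodesic piece $g$ runs between two consecutive distinguished points lying on $\ell$ (the distinguished points being exactly the endpoints of the spike sides that the bounding horocyclic leaves reach). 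This is precisely the data that exhibits $g$, together with the two adjacent horocyclic segments of $\gamma'$, as the geodesic side of a shift rectangle: two opposite sides on leaves of $F$, two opposite sides on leaves of $\mu$, with two opposite vertices distinguished points and no interior distinguished point. A further push, sliding $g$ to the exact geodesic side of that shift rectangle, then turns $g$ into a shift segment.

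The main obstacle I expect is bookkeeping the distinguished points correctly, namely verifying that no \emph{extra} distinguished point lies inside the candidate shift rectangle and that consecutive geodesic pieces stitch together into a genuine stairstep without creating backtracking. Concretely, the non-foliated triangular regions interrupt the foliation $F$, and I must check that each geodesic piece of the good curve spans exactly one "step" — from one distinguished point to the next along its leaf of $\mu$ — rather than skipping past an intervening distinguished point or failing to reach one. The decomposition into the $M_l$ (with $M_l\cap M_{l'}$ a single leaf of $\mu$) and the rectangles $R_l$ already isolates the steps, so the argument is that on the separating leaf between $R_l$ and $R_{l+1}$ the relevant distinguished points are exactly the two endpoints of the shift rectangle, by the defining property that $R_l$'s horocyclic sides each contain a side of a non-foliated region.

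Finally I would assemble these local replacements into the global curve $\gamma^{\star}$. Since each push is a homotopy preserving the endpoints at infinity (by the remark following Lemma \ref{lemma:PushToGood}) and leaves the geodesic pieces inside leaves of $\mu$, the concatenation of the shift segments with the intervening horocyclic pieces is again a horogeodesic curve; it is not contained in $\mu$ because $\gamma$ was assumed not to be a leaf of $\mu$, so by definition $\gamma^{\star}$ is a stairstep curve. I would close by noting that $\gamma^{\star}$ is obtained from $\gamma$ by a sequence of pushes and is therefore homotopic to $\gamma$ through pushes, as required.
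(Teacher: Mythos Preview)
Your approach is essentially the paper's: reduce to a good curve via Lemma~\ref{lemma:PushToGood}, use the rectangle decomposition $R(\gamma)=\bigcup_l R_l$, and push within each $R_l$ so that the geodesic pieces become full geodesic sides of the $R_l$, which are shift segments because the horocyclic sides of each $R_l$ contain distinguished points. The paper makes the final push explicit via a ``Z''/``U'' case analysis of how $\gamma'$ traverses each $R_l$, pushing the \emph{horocyclic} piece to a horocyclic side of $R_l$ (pushes act on horocyclic segments, not geodesic ones), but this is exactly your ``sliding $g$'' seen from the dual side.
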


\begin{proof}
Let $\gamma$ be a bi-infinite horogeodesic curve which avoids backtracking with respect to $\mu$.
Lemma \ref{lemma:PushToGood} says that $\gamma$ can be pushed to a good horogeodesic curve $\gamma'$.
If $\gamma$ was homotopic to a possibly singular leaf of $F$, then $\gamma'$ is precisely that leaf of $F$ and is a stairstep curve.
So assume that $\gamma$ is not homotopic to a leaf of $F$, that is, $\gamma'$ has at least one geodesic piece.  
Consider the set $R(\gamma)=\bigcup_{l\in L} R_l$ containing $\gamma'$, as defined above.
Each horocyclic piece $h_l$ of $\gamma'$ is contained in a rectangle $R_l$ whose horocyclic sides contain distinguished points.
Moreover, each geodesic piece of $\gamma'$ is contained in some geodesic side of a rectangle and every such geodesic side meets precisely one geodesic piece of $\gamma'$.
In every rectangle $R_l$, the curve $\gamma'$, equipped with an arbitrary orientation, either 
\begin{enumerate}
\item draws a ``Z", that is, follows a part of the geodesic boundary of $R_l$, then turns (say, to the left) and enters $R_l$, then goes through it along a horocyclic piece, and finally turns (hence to the right) to follow a part of the other geodesic side of $R_l$;  
\item draws a ``U", that is, follows a part of the geodesic boundary of $R_l$, then turns (say, to the left) and enters $R_l$, then goes through it along a horocyclic piece, and finally turns (also to the left) to follow a part of the other geodesic side of $R_l$. 
\end{enumerate}
In the ``U"-case, the horocyclic part of $\gamma'$ is contained in the boundary of $R_l$ (otherwise, $\gamma'$ would be backtracking with respect to $F$).
See Figure \ref{Figure6}

\begin{figure}[!h]
\centering
\includegraphics[width = 5cm, height = 3cm]{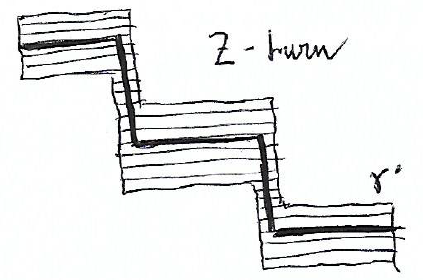}
\includegraphics[width = 5cm, height = 3cm]{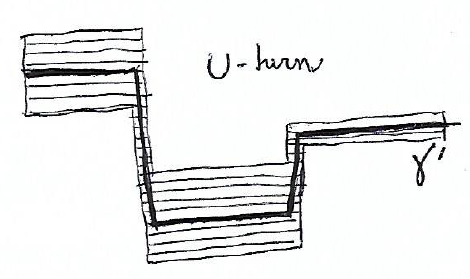}
\caption{The curve $\gamma'$ drawing "Z``- and "U``-turns.}
\label{Figure6}
\end{figure}

In any case, we use a push to modify $\gamma'$ in every rectangle $R_l$ so that the horocyclic part of the new curve $\gamma^{\star}$ is contained in the boundary of $R_l$.
This implies that exactly one geodesic side of $R_l$ is entirely followed by $\gamma^{\star}$.
The geodesic segments of $\gamma^{\star}$ are all shift segments, because all horocyclic sides of the $R_l$'s contain distinguished points.
The curve $\gamma^{\star}$ is a stairstep curve.
The proof is over.
\end{proof}

\begin{lemma}
\label{lemma:moves}
Two bi-infinite good horogeodesic curves are homotopic if and only if they are related by a sequence of pushes.
\end{lemma}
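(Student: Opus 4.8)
The plan is to prove the two implications separately; the ``if'' part is routine and the ``only if'' part carries all the content. For the ``if'' part, I would note that a single push is itself a homotopy: replacing a horocyclic segment by another one with endpoints on the same two leaves of $\mu$ can be realized by sliding it across the foliated strip it sweeps out, the two adjacent geodesic pieces being lengthened or shortened accordingly. This motion leaves the two ends of the bi-infinite curve fixed on $S_{\infty}$, so it is a homotopy relative to the circle at infinity; a finite composition of pushes remains one, and hence push-related curves are homotopic.

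For the ``only if'' part, let $\gamma_{1},\gamma_{2}$ be homotopic good horogeodesic curves. By Lemma~\ref{lemma:endpoints} each converges to two points of $S_{\infty}$, and as the homotopy is relative to $S_{\infty}$ they share the same endpoints $a$ and $b$. The main step is to show that the two curves determine the same set, i.e.\ $N(\gamma_{1})=N(\gamma_{2})$. I would first identify the leaves of $\mu$ that each curve crosses as exactly those separating $a$ from $b$: a separating leaf must be met by any path from $a$ to $b$, while crossing a non-separating leaf would force the curve to recross it in order to return to the side containing both endpoints, and such a recrossing violates the absence of backtracking (here one uses the nesting and the decay of Euclidean diameters of the half-spaces cut off by the ideal triangles, exactly as in the proof of Lemma~\ref{lemma:endpoints}). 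Consequently $\mu_{\gamma_{1}}=\mu_{\gamma_{2}}$ and both curves traverse the same ideal triangles of $\mathbb{H}^{2}\setminus\mu$.

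Next I would check that in each traversed triangle the two curves run through the same spike. In such a triangle $T$ exactly two of its three sides separate $a$ from $b$; these are the entry and exit sides, they meet at a single ideal vertex $v$, and every horocyclic arc joining these two sides lies in the spike at $v$. Since this information depends only on $a$ and $b$, the curves cross the same spikes and therefore $N(\gamma_{1})=N(\gamma_{2})=:N$. Both curves avoid backtracking with respect to $\mu$ and are homotopic, so by the remark following Lemma~\ref{lemma:PushToGood} they lie in this common set $N$ and are homotopic through pushes. Equivalently, one may push each $\gamma_{i}$ to a stairstep curve by Lemma~\ref{lemma:stairstep_good} and observe that two stairstep curves supported on the same $N$ with the same endpoints run through the same ordered sequence of shift rectangles $R_{l}$, hence differ only by pushes performed inside the $R_{l}$.

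I expect the main obstacle to be the rigorous proof that a good curve meets each leaf of $\mu$ at most once and meets precisely the separating leaves, since this is where no backtracking must be combined with the nesting of the ideal triangles, and where the possible accumulation of leaves of $\mu$ (the lamination need not be cut into a discrete sequence of triangles) makes the bookkeeping delicate. The degenerate cases, in which a curve is homotopic to a cusp or to a possibly singular leaf of $F$, I would dispose of separately using the last sentence of that same remark, which singles out the canonical push-representative in each case.
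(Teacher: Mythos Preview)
Your proposal is correct and follows the same strategy as the paper's own proof: establish $N(\gamma_{1})=N(\gamma_{2})$ and then invoke Lemma~\ref{lemma:stairstep_good} to push both curves to a common stairstep representative. You in fact supply more detail than the paper does---in particular the separating-leaf argument for $N(\gamma_{1})=N(\gamma_{2})$, which the paper simply asserts from ``homotopic and good,'' and the handling of possible non-uniqueness of the stairstep curve via further pushes inside the $R_{l}$'s.
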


\begin{proof}
It is clear that two horogeodesic curves that are related by a sequence of pushes are homotopic.

Now consider two bi-infinite good horogeodesic curves $\alpha$ and $\beta$ which are homotopic.
Because they are homotopic and good, they define the same set $N(\alpha)=N(\beta)$ and the same decomposition in rectangles $R = \bigcup_{l\in L} R_l$.
By construction, $\alpha$ and $\beta$ are both contained in $R$.
Thanks to Lemma \ref{lemma:stairstep_good}, $\alpha$ and $\beta$ can be homotoped through pushes to the same stairstep curve $\alpha^{\star} = \beta^{\star}$.
This concludes the proof.
\end{proof}

The above lemma is useful because it shows that homotopic good horogeodesic curves have the same transverse intersection with respect to the horocyclic foliation; see next section.

\begin{remark}
Note that if the curves are lifts of curves on the surface $\Sigma$, pushes can be performed in an equivariant way with respect to deck transformations so that the deformations in $\mathbb{H}^2$ can be carried out on the surface $\Sigma$ if necessary.
\end{remark}

\subsection{Horogeodesic laminations}

A \textbf{horogeodesic lamination} of $\mathbb{H}^{2}$ is a lamination whose leaves are horogeodesic curves and which is homotopic to a geodesic lamination.
A horogeodesic lamination of the surface $\Sigma$ is the projection on $\Sigma$ of a horogeodesic lamination of $\mathbb{H}^{2}$ which is invariant under the deck transformations.
Note that the leaves of a horogeodesic lamination on $\Sigma$ might not be simple.\\

Let us now return back to the surface $\Sigma$ equipped with some hyperbolic metric $h$.
Equip this hyperbolic surface with a complete geodesic lamination $\mu$ and its horocyclic foliation $F=F_{\mu}(h)$. 
Consider a horogeodesic lamination $\alpha^{\star}$, homotopic to a given geodesic lamination $\alpha$.
(We shall see below a way of producing such a $\alpha^{\star}$ from a given $\alpha$.)
Assume that $\alpha$ has a transverse measure.
This transverse measure naturally defines a transverse measure on $\alpha^{\star}$ (one can use the circle at infinity in the universal covering to define the transverse measure on local cross sections; compare \cite{Bonahon}).
We shall then say that the horocyclic lamination $\alpha^{\star}$ equipped with the transverse measure induced by $\alpha$ belongs to the measure class of $\alpha$.
Thanks to this transverse measure, one can compute the length $\ell_{h}(\alpha^{\star})$ as follows.

Fix a rectangular covering $\mathcal{R}=\{R_{1},\cdots,R_{n}\}$ adapted to $F$ and $\mu$.
This means, following Papadopoulos \cite{Pap91}, that each element $R_{j}$, $j=1,\cdots,n$, is a rectangle with two opposite sides contained in leaves of $F$ and two other opposite sides contained in leaves of $\mu$ and such that the union of these rectangles covers the complement of the non-foliated regions and such that all the rectangles have disjoint interiors.
Each rectangle in $\mathcal{R}$ is foliated by leaves of $F$ and is possibly crossed by leaves of $\mu$ transversely to this foliation.
It is explained in \cite{Pap91} how to produce such a rectangular covering.  
Note that the transverse measures of all compact arcs joining transversely to the leaves of $F$ the non-geodesic opposite sides of a given rectangle are the same.
Now each rectangle of $\mathcal{R}$ is crossed by horogeodesic segments of $\alpha^{\star}$.
Define the length of $\alpha^{\star}$ by summing up the lengths of the horogeodesic segments in each rectangles, namely,
$$
\ell_{h}(\alpha^{\star})=\sum_{i=1}^{n}\int_{\textrm{segments\ of}\ R_{i}\cap\alpha^{\star}}\ell_{h}(\alpha^{\star}(x))d\alpha^{\star}(x),
$$
where $\alpha^{\star}(x)$ is a horocyclic segment of $\alpha^{\star}\cap R_{i}$ and $d\alpha^{\star}$ is the transverse measure on $\alpha^{\star}$ induced by $\alpha$.

We have to show that $\ell_{h}(\alpha^{\star})$ is well-defined.

\begin{lemma}
\label{lemma:length_ineq}
Let $\alpha^{\star}$ be a horogeodesic lamination in the measure class of the measured geodesic lamination $\alpha$.
The length of the horogeodesic lamination $\alpha^{\star}$ does not depend upon the choice of a rectangular covering $\mathcal{R}$ of $\Sigma$.
Moreover, we have 
$$\ell_{h}(\alpha^{\star})\geq\ell_{h}(\alpha).$$
\end{lemma}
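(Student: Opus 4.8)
The plan is to read the displayed sum as a bookkeeping device for a single intrinsic quantity, namely the total hyperbolic arc length along the leaves of $\alpha^{\star}$ integrated against the transverse measure $d\alpha^{\star}$ induced by $\alpha$. Once the rectangular formula is recognized as computing this intrinsic integral, independence of the covering becomes a pure additivity statement, and the inequality $\ell_{h}(\alpha^{\star})\geq\ell_{h}(\alpha)$ reduces directly to Proposition \ref{proposition:min_length}.

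For independence I would argue by common refinement. Given two rectangular coverings $\mathcal{R}$ and $\mathcal{R}'$ adapted to $F$ and $\mu$, every rectangle of each has its sides carried by leaves of $F$ and of $\mu$; hence the nonempty intersections $R_{i}\cap R'_{j}$ are again rectangles adapted to $F$ and $\mu$, and they form a common refinement $\mathcal{Q}$. It then suffices to show that the sum computed over $\mathcal{Q}$ agrees with the one computed over $\mathcal{R}$, and symmetrically over $\mathcal{R}'$. Fixing $R_{i}$ and looking at the rectangles of $\mathcal{Q}$ inside it, cutting $R_{i}$ by the $\mu$-leaves coming from $\mathcal{R}'$ subdivides each segment $\alpha^{\star}(x)$ into finitely many subsegments whose hyperbolic lengths add up to $\ell_{h}(\alpha^{\star}(x))$, while cutting by the $F$-leaves of $\mathcal{R}'$ subdivides the transverse parameter into subintervals whose $d\alpha^{\star}$-masses add up. Since arc length is additive along a curve and the transverse measure is additive on disjoint transverse arcs, a Fubini-type additivity shows that the contributions of the refining rectangles sum to the contribution of $R_{i}$; the shared sides, being single leaves of $F$ or of $\mu$, carry no transverse mass and are therefore harmless. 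Summing over $i$ gives the desired equality.

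For the inequality I would invoke the structural results of the previous subsections. Each leaf of $\alpha^{\star}$ is a good essential horogeodesic curve, so by the endpoint lemma and Lemma \ref{lemma:moves} it shares its endpoints on the circle at infinity with, and is thus homotopic to, a leaf of the geodesic lamination $\alpha$; moreover $\alpha^{\star}$ carries exactly the transverse measure of $\alpha$. Hence $\alpha^{\star}$ is a measured partial foliation in the measure class of $\alpha$ of the generalized kind allowed after Proposition \ref{proposition:min_length} (its leaves may fail to be simple or may cross one another), each leaf being homotopic to a leaf of $\alpha$. As the first part of the proof identifies the rectangular length with the length of $\alpha^{\star}$ as such a partial foliation, Proposition \ref{proposition:min_length} applies and yields $\ell_{h}(\alpha)\leq\ell_{h}(\alpha^{\star})$ at once.

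The main obstacle is the interface between the two notions of length: I must make sure that the number produced by the rectangular formula is genuinely the foliation length fed into Proposition \ref{proposition:min_length}, and in particular that the geodesic pieces of the leaves of $\alpha^{\star}$, which run along the $\mu$-sides shared by adjacent rectangles, are accounted for correctly and neither dropped nor double-counted. Controlling these shared boundaries, together with the non-foliated regions and the spikes near the cusps which the rectangular covering only exhausts up to their boundaries, is where the care is required; the additivity and the zero-transverse-mass observations recorded above are precisely what is needed to dispose of them.
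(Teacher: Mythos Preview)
Your approach is essentially the paper's own: a common-refinement argument for independence of the rectangular covering, and a direct appeal to Proposition~\ref{proposition:min_length} for the inequality. One small slip: you assume each leaf of $\alpha^{\star}$ is \emph{good}, but the lemma is stated for arbitrary horogeodesic laminations (and indeed the paper later applies it to the non-good $\alpha^{\star}_{sp}$); this does not matter, since being homotopic to a geodesic lamination is already built into the definition of horogeodesic lamination, so Proposition~\ref{proposition:min_length} applies regardless.
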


\begin{proof}
Let $\mathcal{R}$ and $\mathcal{R}'$ be two rectangular coverings of $\Sigma$.
The rectangles $R'_{1},\cdots,R'_{m}$ decompose each rectangle $R_{j}$ of $\mathcal{R}$ into smaller rectangles $R_{j,1},\cdots,R_{j,n_{j}}$ by looking at the intersections of the rectangles of $\mathcal{R}'$ with $R_{j}$.
It is clear that the length of each leaf $\alpha^{\star}(x)$ of $\alpha^{\star}$ crossing $R_{j}$ is the sum of the lengths of the leaves $\alpha^{\star}(x)\cap R'_{j,k}$, $k=1,\cdots,n_{j}$.
In symbols, 
$$
\ell_{h}(\alpha^{\star}(x))=\sum_{k=1}^{n_{j}}\ell_{h}(\alpha^{\star}(x)\cap R'_{j,k}).
$$
Let us denote by $\ell_{h}^{\mathcal{R}}(\alpha^{\star})$ the length of $\alpha^{\star}$ computed with the rectangular covering $\mathcal{R}$.
The family of small rectangles $R'_{j,k}$, $1\leq j\leq n$, $1\leq k\leq n_{j}$, is a rectangular covering $\mathcal{R}\cap\mathcal{R}'$ of $\Sigma$.
By what has just been pointed out, we have 
$$
\ell_{h}^{\mathcal{R}}(\alpha^{\star})=\ell_{h}^{\mathcal{R}\cap\mathcal{R}'}(\alpha^{\star}).
$$
By combining the rectangles of $\mathcal{R}\cap\mathcal{R}'$ differently, one recovers the rectangular 
covering $\mathcal{R}\cap\mathcal{R}'$.
By combining accordingly the terms in the sum giving $\ell_{h}^{\mathcal{R}\cap\mathcal{R}'}(\alpha^{\star})$, this yields $\ell_{h}^{\mathcal{R}}(\alpha^{\star})=\ell_{h}^{\mathcal{R}'}(\alpha^{\star})$.
The length of $\alpha^{\star}$ thus does not depend upon the chosen rectangular covering of $\Sigma$.

The fact that the length of the horocyclic lamination is greater than the length of the geodesic lamination is a special case of  Proposition \ref{proposition:min_length}.
\end{proof}

The length $\ell_{h}(\alpha^{\star})$ splits into two components, namely, the length of the geodesic part and the length of the horocyclic part of $\alpha^{\star}$.
We write $I(\alpha^{\star},F)$ for the length of the geodesic part of $\alpha^{\star}$ and we call it the \textbf{intersection number} between $\alpha^{\star}$ and $F$ (Recall that we are using the shortcut $F=F_{\mu}(h)$, so the intersection number really depends on $h$).
We write $L_{h}(\alpha^{\star})$ for the length of the horocyclic part of $\alpha^{\star}$.
Thus
$$
\ell_{h}(\alpha^{\star})=I(\alpha^{\star},F)+L_{h}(\alpha^{\star}).
$$
Let $i(\cdot,\cdot)$ denote the geometric intersection number, defined over $\mathcal{ML}(\Sigma)\times\mathcal{ML}(\Sigma)$.
One can find its definition in \cite{Bonahon}, \cite{FLP79}, \cite{Thurston80}.
This intersection number also makes sense over $\mathcal{ML}(\Sigma)\times\mathcal{MF}(\Sigma)$ by the use of the canonical isomorphism between $\mathcal{ML}(\Sigma)$ and $\mathcal{MF}(\Sigma)$.

\begin{lemma}
\label{lemma:Intersection_numbers}
Let $\alpha^{\star}$ be a horogeodesic lamination in the measure class of the measured geodesic lamination $\alpha$.
We have 
$$
I(\alpha^{\star},F)\geq i(\alpha,F)
$$
and the equality $I(\alpha^{\star},F)=i(\alpha,F)$ holds if and only if $\alpha^{\star}$ is a good horogeodesic lamination.
\end{lemma}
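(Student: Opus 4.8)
The plan is to read both quantities as total transverse measures of the foliation $F$ and then to appeal to the variational characterization of the geometric intersection number.

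First I would unwind $I(\alpha^{\star},F)$. Every horocyclic piece of $\alpha^{\star}$ lies inside a leaf of $F$, hence deposits no transverse measure of $F$, while every geodesic piece lies inside a leaf of $\mu$ and crosses $F$ transversely. Along a leaf of $\mu$ the transverse measure of $F$ is, by the very definition of the horocyclic foliation, the hyperbolic arc-length element; therefore the $F$-transverse measure swept out by a geodesic piece equals its hyperbolic length. Integrating against the transverse measure $d\alpha^{\star}$, the length of the geodesic part is exactly the total transverse measure of $F$ collected along $\alpha^{\star}$. In other words, $I(\alpha^{\star},F)$ is the coupling of the curve $\alpha^{\star}$ with the foliation $F$.

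Next I would invoke the description of the intersection number: through the canonical isomorphism $\mathcal{ML}(\Sigma)\cong\mathcal{MF}(\Sigma)$, the quantity $i(\alpha,F)$ is the infimum of the total $F$-transverse measure over all weighted curves homotopic to $\alpha$ and carrying its transverse measure, the infimum being attained by the geodesic realization of $\alpha$ (two geodesic laminations bound no bigon, so the geodesic is in minimal position with $F$). Since $\alpha^{\star}$ is homotopic to $\alpha$ and lies in its measure class, the identification of the first paragraph yields at once $I(\alpha^{\star},F)\geq i(\alpha,F)$. One may alternatively read this inequality off Lemma \ref{lemma:length_ineq} together with Proposition \ref{proposition:min_length}, since the geodesic part is a lower piece of $\ell_{h}(\alpha^{\star})$.

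It remains to analyse the equality case, which is the core of the lemma. The infimum is attained precisely when the representative is in minimal position with $F$, that is, when no subarc of $\alpha^{\star}$ bounds a disc with an arc of a leaf of $F$; for a horogeodesic curve this is exactly the absence of backtracking with respect to $F$, while the absence of backtracking with respect to $\mu$ is forced as well, since re-entering an ideal triangle produces superfluous geodesic pieces and hence superfluous $F$-measure. Thus minimal position with $F$ is equivalent to being good. If $\alpha^{\star}$ is good it is therefore in minimal position and realizes the infimum, giving $I(\alpha^{\star},F)=i(\alpha,F)$. Conversely, if $\alpha^{\star}$ is not good, some leaf backtracks; I would erase the offending disc (or needle, in the degenerate case) by a push as in Figure \ref{Figure2}. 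Such a push cancels an out-and-back crossing of $F$ and strictly shortens the geodesic part by twice the transverse width of the disc, while keeping $\alpha^{\star}$ in the homotopy and measure class of $\alpha$. This produces a competitor of strictly smaller $F$-measure, so $\alpha^{\star}$ did not realize the infimum and the inequality is strict.

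The step I expect to be the main obstacle is this last one: one must verify that a push genuinely and strictly lowers the geodesic length rather than merely redistributing it. Since a push moves a horocyclic segment while sliding the endpoints of the neighbouring geodesic pieces along their leaves of $\mu$, the net change in geodesic length has to be accounted for carefully, and one must also check that every source of excess $F$-measure, whether it originates from backtracking with respect to $F$ or with respect to $\mu$, is ultimately an honest $F$-disc that a push can strictly shrink. Here the nesting of the ideal triangles of $\mathbb{H}^{2}\setminus\mu$, the normal form of Lemma \ref{lemma:stairstep_good}, and the uniqueness up to pushes furnished by Lemma \ref{lemma:moves} are what guarantee that the good representative is essentially unique and that its $F$-measure is the common minimal value $i(\alpha,F)$.
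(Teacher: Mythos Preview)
Your proof is correct and tracks the paper's argument closely: both read $I(\alpha^{\star},F)$ as the $F$-transverse measure collected along $\alpha^{\star}$, get the inequality from the variational definition of $i(\alpha,F)$, and get strictness in the non-good case by erasing a disc.

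The one place where the routes diverge is the implication ``good $\Rightarrow$ equality''. You invoke the general minimal-position/bigon criterion: a good curve bounds no disc with $F$, hence realizes the infimum. The paper instead argues internally to the class of horogeodesic representatives: the infimum defining $i(\alpha,F)$ may be restricted to horogeodesic laminations, then to good ones (since erasing a disc strictly decreases $I$), and finally any two good representatives are related by pushes (Lemma~\ref{lemma:moves}), which \emph{preserve} $I(\cdot,F)$; hence all good ones share the common minimal value. Your final paragraph essentially rediscovers this route. The paper's argument buys you a cleaner proof here, because the standard bigon criterion is formulated for curves transverse to $F$, whereas a horogeodesic curve has horocyclic pieces lying \emph{inside} leaves of $F$; adapting the criterion to that setting is exactly the work that the push-invariance argument does for free. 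Conversely, your formulation makes the conceptual content (``good $=$ minimal position with $F$'') more visible. Your closing caveat about pushes is well placed: a generic push between two good curves redistributes geodesic length without changing $I$, and only the disc-erasing pushes strictly decrease it---this is precisely why the paper separates the two roles of pushes.
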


\begin{proof}
By definition, we have $I(\alpha',F)\geq i(\alpha,F)$ for all laminations $\alpha'$ in the measure class of $\alpha$.
The existence of a disc bounded by leaves of $F$ and $\alpha^{\star}$ obviously implies that $I(\alpha^{\star},F)>i(\alpha,F)$, since the number $I(\alpha^{\star},F)$ can certainly be decreased by erasing the disc.
This shows that if the above equality holds, then the horogeodesic lamination $\alpha^{\star}$ is good.

It is easy to see that the infimum in the definition of $i(\alpha,F)$ can be taken over all horogeodesic laminations.
Recalling that erasing discs strictly decreases the intersection number, we can conclude that the infimum can be taken over all \emph{good} horogeodesic laminations.

Let $\alpha'$ and $\alpha''$ be two good horogeodesic laminations in the measure class of $\alpha$.
From Lemma \ref{lemma:moves}, one can pass from $\alpha'$ to $\alpha''$ by using pushes and these moves preserve the intersection number with the horocyclic foliation $F$.
We therefore have
$$
I(\alpha',F)=I(\alpha'',F),
$$
hence $I(\alpha',F)=i(\alpha,F)$ for all good measured horogeodesic laminations $\alpha'$ in the measure class of $\alpha$.
\end{proof}

\section{Some useful inequalities}

Let $\mu$ be a complete geodesic lamination of the surface $\Sigma$ and let $h$ be a hyperbolic metric on $\Sigma$.
Let $F_{\mu}(h)$ be the horocyclic foliation associated to $\mu$ and $h$ and let $\lambda_{\mu}(h)$ denote the corresponding geodesic lamination.
Consider a measured geodesic lamination $\alpha$ and construct a special horogeodesic lamination $\alpha^{\star}_{sp}$ in the measure class of $\alpha$ as follows.

If $\alpha$ is contained in $\mu$ or in $\lambda_{\mu}(h)$, then we set $\alpha^{\star}_{sp}=\alpha$.

Suppose now that $\alpha$ is transverse to $\mu$.
Fix a rectangular covering $\mathcal{R}=\{R_{1},\cdots,R_{n}\}$ adapted to $\mu$ and $F_{\mu}(h)$.
A component of the intersection between the geodesic lamination $\alpha$ and a rectangle $R_{j}$ of $\mathcal{R}$ is a geodesic segment $c$ whose endpoints either 
\begin{itemize}
\item both lie in one non-geodesic side of $R_{j}$, or
\item connect the two opposite non-geodesic sides of $R_{j}$, or
\item connect the two opposite geodesic sides of $R_{j}$, or
\item connect one non-geodesic side of $R_{j}$ with one geodesic side of $R_{j}$.
\end{itemize}

If one of the last three cases occurs, we replace the geodesic segment $c$ by a \emph{good} horogeodesic curve $c^{\star}$ with the same endpoints, made up of one horocyclic piece and one geodesic piece;
the curve $c^{\star}$ is contained in $R_j$.
(Note that the third case can be reduced to the two first cases by subdiving $R_{j}$ along a horocyclic leaf of $F$.)
If the first case occurs, we replace the geodesic segment $c$ by a horogeodesic curve contained in $R_j$ with the same endpoints but which is \emph{not} good.
This horogeodesic curve $c^{\star}$ is a needle, made up of one piece contained in the non-geodesic boundary of the rectangle, followed by a geodesic segment perpendicular to the leaves of $F_{\mu}(h)$ joining the non-geodesic boundary to the farthest point of $c$ and finally followed by a piece contained in the non-geodesic boundary.
See Figure \ref{Figure7}.

\begin{figure}[!h]
\centering
\includegraphics[width = 10cm, height = 3cm]{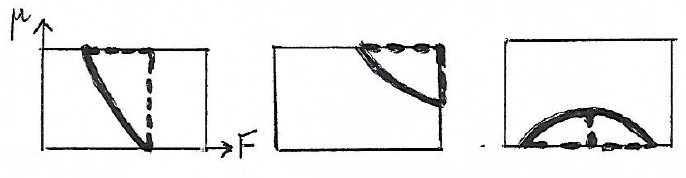}
\caption{Replacement of $c$.}
\label{Figure7}
\end{figure}

If $\alpha$ crosses a non-foliated region, we modify $\alpha$ so that it follows the boundary (and this way avoids the interior) of the non-foliated region.

The main observation to be made here lies in the following inequality.
$$
\ell_{h}(c^{\star})\geq\ell_{h}(c).
$$
This inequality comes from the fact (relying on a quick argument of hyperbolic geometry) that the length of a geodesic segment crossing a spike is greater or equal to the length of its projection along $F_\mu(h)$ onto the boundary of the spike.

Summing up the lengths of all geodesic arcs of $\alpha\cap R_{j}$ and summing up again over all rectangles of $\mathcal{R}$, we obtain the inequality
$$
I(\alpha^{\star}_{sp},\lambda_{\mu}(h)) \leq \ell_{h}(\alpha).
$$

Now, according to Lemma \ref{lemma:stairstep_good}, we can homotope $\alpha^{\star}_{esp}$ into a good horogeodesic curve $\alpha^{\star}$.
Lemma \ref{lemma:Intersection_numbers} implies that $i(\alpha,\lambda_{\mu}(h)) = I(\alpha^{\star},\lambda_{\mu}(h)) \leq I(\alpha^{\star}_{sp},\lambda_{\mu}(h))$.
We thus get the following inequalities.

\begin{proposition}
\label{proposition:inequalities}
Let $h$ be a hyperbolic metric on $\Sigma$ and $\mu$ be a complete geodesic lamination of $\Sigma$.
Let $\alpha$ be a measured geodesic lamination of the surface $\Sigma$ and let $\alpha^{\star}$
be a good horogeodesic lamination in the class of $\alpha$.
The following inequalities hold.
$$
i(\alpha,\lambda_{\mu}(h)) 
\leq \ell_{h}(\alpha) 
\leq 
i(\alpha,\lambda_{\mu}(h))+L_{h}(\alpha^{\star}).
$$
\end{proposition}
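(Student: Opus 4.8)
The plan is to prove the two inequalities separately, each by assembling the construction of the special horogeodesic lamination $\alpha^{\star}_{sp}$ with the lemmas of the previous section. I would keep the rectangular covering $\mathcal{R}$ adapted to $\mu$ and $F_{\mu}(h)$ together with $\alpha^{\star}_{sp}$ as built above, and let $\alpha^{\star}$ be the good horogeodesic lamination obtained from $\alpha^{\star}_{sp}$ by Lemma \ref{lemma:stairstep_good}. Throughout I use the canonical identification of $F=F_{\mu}(h)$ with the geodesic lamination $\lambda_{\mu}(h)$, under which $i(\alpha,F)=i(\alpha,\lambda_{\mu}(h))$.

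For the lower bound, the key geometric input is the spike inequality already recorded: in each rectangle, the geodesic piece of the replacement $c^{\star}$, being the projection of the geodesic arc $c$ of $\alpha$ along $F_{\mu}(h)$ onto a leaf of $\mu$, has length at most $\ell_{h}(c)$. Summing over all arcs of $\alpha$ and over all rectangles of $\mathcal{R}$ yields $I(\alpha^{\star}_{sp},\lambda_{\mu}(h))\leq\ell_{h}(\alpha)$, since the arcs $c$ partition $\alpha$ across the covering. On the other hand, Lemma \ref{lemma:Intersection_numbers} gives $i(\alpha,\lambda_{\mu}(h))\leq I(\alpha^{\star}_{sp},\lambda_{\mu}(h))$, with equality once $\alpha^{\star}_{sp}$ is pushed to the good lamination $\alpha^{\star}$. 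Chaining,
\begin{equation*}
i(\alpha,\lambda_{\mu}(h))\leq I(\alpha^{\star}_{sp},\lambda_{\mu}(h))\leq\ell_{h}(\alpha),
\end{equation*}
which is the left-hand inequality.

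For the upper bound I would start from the splitting $\ell_{h}(\alpha^{\star})=I(\alpha^{\star},F)+L_{h}(\alpha^{\star})$ into geodesic and horocyclic parts. Since $\alpha^{\star}$ is a good horogeodesic lamination in the measure class of $\alpha$, Lemma \ref{lemma:Intersection_numbers} identifies its geodesic part with the geometric intersection number, $I(\alpha^{\star},F)=i(\alpha,F)=i(\alpha,\lambda_{\mu}(h))$, and Lemma \ref{lemma:length_ineq} (a special case of Proposition \ref{proposition:min_length}) gives $\ell_{h}(\alpha)\leq\ell_{h}(\alpha^{\star})$. Combining,
\begin{equation*}
\ell_{h}(\alpha)\leq\ell_{h}(\alpha^{\star})=I(\alpha^{\star},F)+L_{h}(\alpha^{\star})=i(\alpha,\lambda_{\mu}(h))+L_{h}(\alpha^{\star}),
\end{equation*}
which is the right-hand inequality.

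The genuine content, and the only step that is not a formal assembly of earlier results, is the spike inequality feeding the lower bound: one must check in $\mathbb{H}^{2}$ that projecting a geodesic arc crossing a spike along the horocyclic foliation onto a geodesic boundary leaf does not increase its length. I expect this to be the main, though short, obstacle; once it is in hand, the remainder is bookkeeping with the identification $i(\alpha,F)=i(\alpha,\lambda_{\mu}(h))$ and the additivity $\ell_{h}(\alpha^{\star})=I(\alpha^{\star},F)+L_{h}(\alpha^{\star})$.
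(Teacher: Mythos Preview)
Your proposal is correct and follows essentially the same route as the paper: the lower bound comes from the spike inequality for $\alpha^{\star}_{sp}$ combined with Lemma~\ref{lemma:Intersection_numbers}, and the upper bound from Proposition~\ref{proposition:min_length} together with the splitting $\ell_h(\alpha^{\star})=I(\alpha^{\star},F)+L_h(\alpha^{\star})$ and the equality case of Lemma~\ref{lemma:Intersection_numbers}. Your closing remark that the spike inequality is the only non-formal ingredient is also exactly how the paper treats it, deferring the hyperbolic-geometry computation to a parenthetical.
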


\begin{proof}
From the discussion above we draw the first inequality 
$$
i(\alpha,\lambda_{\mu}(h)) = I(\alpha^{\star},\lambda_{\mu}(h)) 
\leq \ell_{h}(\alpha).
$$
Lemma \ref{proposition:min_length} implies that 
\begin{eqnarray*}
\ell_{h}(\alpha) \leq \ell_{h}(\alpha^{\star}) & = & I(\alpha^{\star},\lambda_{\mu}(h))+L_{h}(\alpha^{\star})\\
& = & i(\alpha,\lambda_{\mu}(h)) + L_{h}(\alpha^{\star}).
\end{eqnarray*}
\end{proof}

\section{Geodesic laminations disjoint from the horocyclic lamination}

Suppose that the surface $\Sigma$ is equiped with a hyperbolic metric $h$ and with a complete geodesic lamination $\mu$. 
As before, $\lambda = \lambda_\mu(h)$ denotes the measured geodesic lamination associated to the horocyclic foliation $F = F_{\mu}(h)$.

Let $G_F$ be the singular graph of the partial foliation $F$, that is, the union of all pieces of leaves of $F$ connecting two distinguished points.
The graph $G_F$ is compact, at most trivalent.
It might be homotopic to a point, in which case we consider $G_F$ as empty.

\begin{lemma}
\label{lemma:PositiveIntersection}
Let $\alpha$ be a measured geodesic lamination and let $\alpha^{\star}$ be a good horogeodesic lamination in the measure class of $\alpha$.
Then $i(\alpha,\lambda)=0$ if and only if each leaf of $\alpha^{\star}$ is contained in $G_F$ or is a leaf of $F$.
In particular, if the lamination $\alpha$ is disjoint from $\lambda$, then $\alpha^{\star}$ is contained in $G_F$.
If the lamination $\alpha$ has a transverse intersection with $\lambda$, then $i(\alpha,\lambda)>0$.
\end{lemma}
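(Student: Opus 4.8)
The plan is to convert the vanishing of $i(\alpha,\lambda)$ into the vanishing of the \emph{geodesic part} of $\alpha^{\star}$ and then read off the geometry of a purely horocyclic leaf. First I would record the identity that drives everything. Since $\alpha^{\star}$ is good, Lemma \ref{lemma:Intersection_numbers} gives $I(\alpha^{\star},F)=i(\alpha,F)$, and under the canonical identification $\mathcal{ML}(\Sigma)\cong\mathcal{MF}(\Sigma)$ one has $i(\alpha,F)=i(\alpha,\lambda)$; since $I(\alpha^{\star},F)$ is by definition the total length of the geodesic pieces of $\alpha^{\star}$, this gives $i(\alpha,\lambda)=I(\alpha^{\star},F)$. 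Hence the whole statement reduces to classifying the leaves of $\alpha^{\star}$ having no geodesic piece of positive length. The easy implication is then immediate: a leaf contained in $G_{F}$ or equal to a leaf of $F$ lies inside a union of leaves of $F$, so it has empty geodesic part and contributes $0$ to $I(\alpha^{\star},F)$; summing over leaves yields $i(\alpha,\lambda)=0$.

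For the converse I would assume $i(\alpha,\lambda)=0$, so that $\alpha^{\star}$ has no geodesic piece of positive length, and locate the corners of such a leaf. Two distinct leaves of the foliation $F$ can only come together at a singularity of $F$, and these singularities are exactly the distinguished points; along any leaf of $\mu$ the foliation $F$ is transverse and single-valued, so a horogeodesic curve can pass from one leaf of $F$ to another only at a distinguished point, through a degenerate geodesic piece. Consequently a leaf with no geodesic piece of positive length is a concatenation of arcs of leaves of $F$ whose junctions are degenerate geodesic pieces located at distinguished points. If there is no junction the leaf follows a single leaf of $F$ throughout and \emph{is} a leaf of $F$; if there is at least one junction, each arc between consecutive junctions is a piece of a leaf of $F$ joining two distinguished points, hence lies in $G_{F}$, so the whole leaf lies in $G_{F}$. (Lemma \ref{lemma:endpoints} ensures an essential leaf has two distinct endpoints on $S_{\infty}$, ruling out a leaf that closes up into a single horocycle.)

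The delicate point, which I expect to be the main obstacle, is upgrading ``almost every leaf has no geodesic piece''—which is all the identity $I(\alpha^{\star},F)=0$ gives at face value—to the claim that \emph{every} leaf of $\alpha^{\star}$ is purely horocyclic. Here I would invoke the local product structure of the lamination together with the full support of its transverse measure: were a single leaf to travel along a leaf $m$ of $\mu$ over a segment of positive length, every leaf of $\alpha^{\star}$ crossing a short transversal at an interior point of that segment would run parallel to $m$ over a comparable length; as such a transversal carries positive transverse measure, this would force $I(\alpha^{\star},F)>0$, a contradiction. Hence no leaf has a geodesic piece of positive length, and the classification of the previous paragraph applies to every leaf.

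The two remaining assertions then follow. If $\alpha$ is disjoint from $\lambda$ then $i(\alpha,\lambda)=0$, so each leaf of $\alpha^{\star}$ lies in $G_{F}$ or is a leaf of $F$; but a leaf equal to a leaf of $F$ is homotopic to a leaf of $\lambda$ (a curve homotopic to a leaf of $F$ is pushed onto it), which would make $\alpha$ share a leaf with $\lambda$, contradicting disjointness, so $\alpha^{\star}\subset G_{F}$. Finally, if $\alpha$ met $\lambda$ transversally while $i(\alpha,\lambda)=0$, the first part would place every leaf of $\alpha^{\star}$ in $G_{F}\cup\{\text{leaves of }F\}$; but leaves of $F$ are homotopic to leaves of $\lambda$ and leaves in $G_{F}$ are homotopic to curves disjoint from $\lambda$, so no leaf of $\alpha^{\star}$ could cross $\lambda$ transversally, contradicting the hypothesis. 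Therefore $i(\alpha,\lambda)>0$.
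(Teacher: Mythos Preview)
Your argument is essentially correct and follows the same backbone as the paper: both use Lemma~\ref{lemma:Intersection_numbers} to identify $i(\alpha,\lambda)$ with $I(\alpha^{\star},F)$, then analyse the geodesic pieces of $\alpha^{\star}$. The organisation differs: the paper argues by contrapositive (if a leaf of $\alpha^{\star}$ is neither a leaf of $F$ nor contained in $G_F$, it has a genuine geodesic piece, hence $I>0$), whereas you argue directly (if $I=0$, every leaf is purely horocyclic and therefore a leaf of $F$ or in $G_F$). The paper also passes through the stairstep normal form (Lemma~\ref{lemma:stairstep_good}) to isolate a concrete shift segment, which you bypass.

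One step of yours is softer than the paper's and deserves tightening. In your ``delicate point'' paragraph you appeal to a local product structure of $\alpha^{\star}$ to argue that a positive-length geodesic piece on one leaf forces nearby leaves to have comparable geodesic pieces. But horogeodesic laminations in this paper are rather loose objects (the leaves need not even be simple on $\Sigma$), so no such local product structure has been established. The paper avoids this by first pushing the offending leaf to a stairstep curve, thereby producing a specific shift segment $c$; then it uses the dichotomy for the corresponding \emph{geodesic} leaf of $\alpha$: either it is isolated (an atom, so $c$ alone contributes positively), or it is approximated by a continuum of leaves of $\alpha$ that traverse the same finite sequence of spikes and hence, after the stairstep replacement, share the very same shift segment $c$. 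That is the clean reason why a single geodesic piece forces $I(\alpha^{\star},F)>0$. If you route your argument through the stairstep form in the same way, your direct proof goes through without the unjustified appeal to a product structure.
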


\begin{proof}
It is clear that if every leaf of $\alpha^{\star}$ is either contained in $G_F$ or is a leaf of $F$, then $i(\alpha,\lambda)=I(\alpha^{\star},\lambda)=0$.

Conversely, suppose that there exists a leaf of $\alpha^{\star}$ which is not a leaf of $F$ nor contained in $G_F$.
This leaf then contains a geodesic piece.  
Up to pushes, we can assume that this leaf is a stairstep leaf $\gamma$ and that the geodesic piece is a shift segment $c$.
The length of this geodesic piece is its intersection number $i(c,F)$ with respect to the transverse measure of $F$.
The leaf $\gamma$ either corresponds to an isolated leaf or to a dense leaf of $\alpha$.
In the first case, the intersection number $i(c,F)$ contributes positively to $i(\alpha,\lambda) = I(\alpha^{\star},F)$.
In the second case, there will be a continuum of other leaves of $\alpha$ following $\gamma$ through the same spikes about $c$, and each of them yields the same shift segment when they are replaced by a stairstep curve in $\alpha^{\star}$.  
All these shift segments contribute positively to $i(\alpha,\lambda) = I(\alpha^{\star},F)$ as well.
We therefore have $i(\alpha,\lambda)>0$, which proves the converse.

Now if the lamination $\alpha$ is disjoint from $\lambda$, then $i(\alpha,\lambda)=0$ and every leaf of $\alpha^{\star}$ is contained in $G_F$ or is a leaf of $F$.
But disjointness implies that no leaf of $\alpha^{\star}$ can be a leaf of $F$.
The proof is over.
\end{proof}

\section{Proof of the main theorem}

Let $\mu$ be a complete geodesic lamination on $\Sigma$ and let $h$ be a hyperbolic metric on $\Sigma$.
Consider the stretch ray $t\mapsto h_{t}$ supported by $\mu$ and emanating from $h_{0}=h$ for $t=0$.
Set as before $F=F_{\mu}(h)$ and let $\lambda$ denote the projective class of the geodesic lamination canonically associated to $F$.
We shall now prove Theorem \ref{theorem:behavior} stated in the introduction.

\begin{theorem}
Let $t\mapsto h_{t}$, $t\in\mathbb{R}_{\geq0}$, be a stretch ray of Teichm\"uller space $\mathcal{T}(\Sigma)$ whose support is denoted by $\mu$ and its direction by $\lambda$.
Let $\alpha$ be a measured geodesic lamination of $\Sigma$.
\begin{itemize}
\item If $\alpha$ has non-empty transverse intersection with $\lambda$, then
$\displaystyle{\lim_{t\to+\infty}}\ell_{h_{t}}(\alpha)=+\infty$.
\item If $\alpha$ is contained in the direction $\lambda$, then $\displaystyle{\lim_{t\to+\infty}}\ell_{h_{t}}(\alpha)=0$.
\item If $\alpha$ has empty intersection with the direction $\lambda$, then the function $t\mapsto\ell_{h_{t}}(\alpha)$ is bounded in $]0 ; +\infty[$.
\end{itemize}
\end{theorem}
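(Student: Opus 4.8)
The plan is to treat the three cases separately, feeding in each case the sandwich inequality of Proposition~\ref{proposition:inequalities} evaluated along the ray. The one structural fact that drives everything is that, by the very definition of the stretch line recalled in Section~2, the horocyclic lamination of $h_{t}$ is $\lambda_{\mu}(h_{t})=e^{t}\lambda_{\mu}(h_{0})$; hence, by homogeneity of the geometric intersection number,
$$
i(\alpha,\lambda_{\mu}(h_{t}))=e^{t}\,i(\alpha,\lambda_{\mu}(h_{0})).
$$
Write $\lambda:=\lambda_{\mu}(h_{0})$ for the fixed representative of the direction, set $F_{t}:=F_{\mu}(h_{t})$, and let $\alpha^{\star}_{t}$ denote a good horogeodesic lamination in the class of $\alpha$ with respect to $F_{t}$.

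The first two cases are immediate from the material already assembled. If $\alpha$ has non-empty transverse intersection with $\lambda$, then $i(\alpha,\lambda)>0$ by Lemma~\ref{lemma:PositiveIntersection}; the lower bound of Proposition~\ref{proposition:inequalities} applied at $h_{t}$ then gives
$$
\ell_{h_{t}}(\alpha)\ \geq\ i(\alpha,\lambda_{\mu}(h_{t}))\ =\ e^{t}\,i(\alpha,\lambda)\ \xrightarrow[t\to+\infty]{}\ +\infty .
$$
If $\alpha$ is contained in $\lambda$, there is nothing further to prove: this is exactly statement~(2) of Lemma~\ref{lemma:lambda_mu}, so $\ell_{h_{t}}(\alpha)\to 0$.

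It remains to handle the case where $\alpha$ has empty intersection with $\lambda$. Then $i(\alpha,\lambda)=0$, so $i(\alpha,\lambda_{\mu}(h_{t}))=0$ for every $t$, and the upper bound of Proposition~\ref{proposition:inequalities} collapses to $\ell_{h_{t}}(\alpha)\leq L_{h_{t}}(\alpha^{\star}_{t})$. By Lemma~\ref{lemma:PositiveIntersection}, $\alpha^{\star}_{t}$ is carried by the singular graph $G_{F_{t}}$; in particular it has no geodesic piece, so $L_{h_{t}}(\alpha^{\star}_{t})=\ell_{h_{t}}(\alpha^{\star}_{t})$, a purely horocyclic length. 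I would bound this quantity uniformly in $t$ by a two–factor estimate. First, every complementary region of $\mu$ is an ideal triangle, and every ideal triangle of $\mathbb{H}^{2}$ is isometric to the standard one carrying the standard horocyclic foliation; consequently the portion of any horocyclic leaf lying inside a single ideal triangle is a sub–arc of one horocycle whose length is bounded by a universal constant $K_{0}$, \emph{independent of the metric $h_{t}$ and of the leaf}. Second, since $\alpha^{\star}_{t}$ avoids backtracking with respect to $\mu$, it meets $\mu$ in minimal position, so the total $d\alpha$–mass of its crossings with $\mu$ equals the topological number $i(\alpha,\mu)$, which does not depend on $t$. As each horocyclic arc of $\alpha^{\star}_{t}$ enters and leaves an ideal triangle through leaves of $\mu$, the number of such arcs (weighted by $d\alpha$) is controlled by $i(\alpha,\mu)$, and one obtains $L_{h_{t}}(\alpha^{\star}_{t})\leq K_{0}\,i(\alpha,\mu)$, a bound independent of $t$. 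This yields the asserted boundedness of $t\mapsto\ell_{h_{t}}(\alpha)$ on $\,]0;+\infty[$.

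The step I expect to be the main obstacle is precisely this uniform control of $L_{h_{t}}(\alpha^{\star}_{t})$ in the third case. The delicate point is to make rigorous the dichotomy "bounded length per triangle, topologically bounded number of triangles visited'': although the individual edges of $G_{F_{t}}$ joining distinguished points may thread through more and more triangles as the shear along $\mu$ grows with $t$, their cumulative contribution stays pinned down by the $t$–independent minimal intersection $i(\alpha,\mu)$, because the within–triangle geometry is frozen once one uses that all ideal triangles are isometric to the standard model. Some care is also needed in reading $i(\alpha,\mu)$ when $\mu$ carries no transverse measure: it must be interpreted as the total $d\alpha$–mass of the minimal crossings of $\alpha$ with $\mu$, which is finite because $\alpha$ is compact, and this finiteness together with the homotopy–invariance furnished by the no–backtracking condition is what makes the bound uniform in $t$.
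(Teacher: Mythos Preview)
Your handling of the first two cases is correct and identical to the paper's. The third case has two genuine gaps.

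\textbf{Upper bound.} Your estimate $L_{h_{t}}(\alpha^{\star}_{t})\leq K_{0}\,i(\alpha,\mu)$ fails because the right-hand side is typically infinite. A complete lamination $\mu$ need not have finitely many leaves; when $\mu$ contains a non-trivial minimal component, a closed geodesic $\alpha$ transverse to it meets $\mu$ in a Cantor set, so the number of arcs of $\alpha$ (and hence of $\alpha^{\star}$) within ideal triangles is countably infinite. Your per-triangle bound $K_{0}$ is fine, but summing infinitely many copies of it yields nothing. Your proposed reinterpretation of $i(\alpha,\mu)$ as ``the total $d\alpha$-mass of the minimal crossings'' does not rescue this: $d\alpha$ is a measure on transversals to $\alpha$, not a device for counting points of $\alpha\cap\mu$, and compactness of $\alpha$ does not force that count to be finite. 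You flag exactly this step as the main obstacle, and it is; the dichotomy ``bounded length per triangle, bounded number of triangles visited'' simply does not hold. The paper avoids the issue entirely: it fixes one good horogeodesic representative $\alpha^{\star}$ (which is canonically carried along the ray) and observes that under the stretch every horocyclic piece is pushed deeper into its spike, so its length decreases. Thus $t\mapsto L_{h_{t}}(\alpha^{\star})$ is monotone nonincreasing and bounded by its finite initial value $L_{h_{0}}(\alpha^{\star})$ (this is Lemma~\ref{lemma:asymptotic_length_horogeodesic}).

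\textbf{Lower bound.} You omit it. ``Bounded in $]0;+\infty[$'' in the statement means bounded away from zero as well as from above, and the paper devotes a separate argument to this half. Since $\alpha$ is disjoint from but not contained in $\lambda$, one cannot use the good representative (whose intersection with $F$ vanishes); instead the paper uses the special \emph{non-good} horogeodesic lamination $\alpha^{\star}_{sp}$ built from needles, for which $I(F_{\mu}(h_{t}),\alpha^{\star}_{sp})>0$ lower-bounds $\ell_{h_{t}}(\alpha)$, and then argues by contradiction that this quantity cannot tend to zero along the ray (if it did, the leaves of $\alpha^{\star}_{sp}$ would be forced into arbitrarily small neighborhoods of concatenations of non-foliated-region boundaries, which are not homotopic to any geodesic).
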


\begin{proof}
The case where $\alpha$ is contained in $\lambda$ has already been dealt with in Lemma \ref{lemma:lambda_mu}.
Suppose that $\alpha$ has non-empty transverse intersection with the direction $\lambda$.
By Proposition \ref{proposition:inequalities}, we have, for all $t\geq0$,
$$
i(\alpha,F_{\mu}(h_{t}))\leq \ell_{h_{t}}(\alpha).
$$
Since $\alpha$ is not contained in $\lambda$, Lemma \ref{lemma:PositiveIntersection} says that $i(\alpha,F_{\mu}(h))>0$.
Now by definition,
$$
i(\alpha,F_{\mu}(h_{t}))=e^{t}i(\alpha,F_{\mu}(h)).
$$
Therefore, $\displaystyle{\lim_{t\to+\infty}}i(\alpha,F_{\mu}(h_{t}))=+\infty$ and hence $\displaystyle{\lim_{t\to+\infty}}\ell_{h_{t}}(\alpha)=+\infty$.\\

Let us now deal with the last case, where $\alpha$ has empty intersection with $\lambda$.
We have $i(\alpha,\lambda)=0$.
Proposition \ref{proposition:inequalities} gives, for any $h$,
$$
0\leq \ell_{h}(\alpha) \leq L_{h}(\alpha^{\star}),
$$
where $\alpha^{\star}$ is a good horogeodesic lamination representing $\alpha$, which is contained in $G_F$.
Once $\alpha^{\star}$ has been chosen for some $t\geq0$, this choice is canonically carried over for all the other values of $t$.
It is then rather easy to see that $L_{h_t}(\alpha^{\star})$ is decreasing and bounded as $t$ increases (see Lemma \ref{lemma:asymptotic_length_horogeodesic}).
Hence $t\mapsto\ell_{h_{t}}(\alpha)$ is bounded from above.
It remains to show that the function $t\mapsto\ell_{h_{t}}(\alpha)$ is also bounded from below away from zero.
To do this, we go back to the inequality 
$$
I(F_{\mu}(h_{t}),\alpha^{\star}_{sp})\leq \ell_{h_{t}}(\alpha),
$$ 
for some horogeodesic lamination $\alpha^{\star}_{sp}$ in the class of $\alpha$ which is \emph{not} good (as described above).
The non-goodness of $\alpha^{\star}_{sp}$ implies that $I(F_{\mu}(h_{t}),\alpha^{\star}_{sp})>0$ for all $t\geq0$.
It remains to show that $I(F_{\mu}(h_{t}),\alpha^{\star}_{sp})$ is bounded below by a positive $t$-independent constant.
We reason by contradiction.
Suppose that $I(F_{\mu}(h_{t}),\alpha^{\star}_{sp})$ converges to zero for some sequence of values of $t$ increasing to infinity.
This implies that the lengths of almost all geodesic segments of $\alpha^{\star}_{sp}$ converge to zero.
Up to erasing all needles of $\alpha^{\star}_{sp}$ with pushes, we can assume that all geodesic segments of $\alpha^{\star}_{sp}$ converge to zero.
Hence, for any $\epsilon>0$ and for any $t$ great enough, each leaf of $\alpha^{\star}_{sp}$ is contained in an $\epsilon$-neighbourhood of a well-defined singular leaf of $F$.
For $t$ great enough, this singular leaf is very close to a concatenation of boundaries of non-foliated regions.
But such a curve cannot be close to any geodesic, which is a contradiction.
Hence there exists a lower bound (which I think does not depend upon $\alpha$) to $I(F_{\mu}(h_{t}),\alpha^{\star}_{sp})$.
This concludes the proof.
\end{proof}

We used the following result.

\begin{lemma}
\label{lemma:asymptotic_length_horogeodesic}
Let $\alpha$ be a measured geodesic lamination and let $\alpha^{\star}$ be a good horogeodesic lamination in the same measure class as $\alpha$.
Let $t\mapsto h_{t}$, $t\geq0$, be a stretch ray of Teichm\"uller space $\mathcal{T}(\Sigma)$ whose support is denoted by $\mu$ and its direction by $\lambda$.
The length function $t\mapsto L_{h_{t}}(\alpha^{\star})$ is decreases as $t$ increases and is therefore bounded from above.
\end{lemma}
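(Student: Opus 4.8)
The plan is to exploit the explicit local form of Thurston's stretch map near the ideal vertices and to compute the horocyclic length arc by arc. First I would fix the good horogeodesic lamination $\alpha^{\star}$ at the initial time $t=0$ and observe that the representative used to compute $L_{h_{t}}(\alpha^{\star})$ for the other values of $t$ is the image $\phi_{t}(\alpha^{\star})$ of $\alpha^{\star}$ under the stretch map $\phi_{t}\colon(\Sigma,h_{0})\to(\Sigma,h_{t})$. Since $\phi_{t}$ sends leaves of $\mu$ to leaves of $\mu$ and leaves of $F_{\mu}(h_{0})$ to leaves of $F_{\mu}(h_{t})$, the image $\phi_{t}(\alpha^{\star})$ is again a good horogeodesic lamination in the measure class of $\alpha$, and its horocyclic part is exactly the $\phi_{t}$-image of the horocyclic part of $\alpha^{\star}$. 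This reduces the statement to controlling how the $h_{t}$-length of a single horocyclic arc evolves with $t$.

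Next I would record the standard model of a spike (compare the computation in Lemma \ref{lemma:length_horocyclic}). Placing the ideal vertex at $\infty$ in the upper half-plane and the two sides of the ideal triangle along the vertical geodesics $x=0$ and $x=1$, the horocyclic foliation is the foliation by horizontal segments $y=\mathrm{const}$, the boundary of the non-foliated region is the unit-length arc at height $y=1$, and the distinguished points sit at $y=1$. In these coordinates the stretch map takes the form
$$
\phi_{t}(x,y)=(x,y^{e^{t}}),
$$
which fixes the distinguished points, preserves the horocyclic foliation, and multiplies arc length along each side $x=0$ and $x=1$ (hence along $\mu$) by $e^{t}$, as demanded by the definition of the stretch line (see \cite{Thurston86}). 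A horocyclic sub-arc lying at height $y\geq 1$ and joining $x=a$ to $x=b$ has $h_{0}$-length $(b-a)/y$, so its image has $h_{t}$-length $(b-a)/y^{e^{t}}=\bigl((b-a)/y\bigr)\,y^{-(e^{t}-1)}$.

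The key estimate is then immediate. Every horocyclic piece of $\alpha^{\star}$ lies in the foliated part of a spike, hence at height $y\geq 1$, so for $t\geq 0$ the factor $y^{-(e^{t}-1)}$ is at most $1$ and is non-increasing in $t$. Writing $L_{h_{t}}(\alpha^{\star})$ as the integral against the transverse measure $d\alpha$ of the total horocyclic length of the leaf through each point, and decomposing each such leaf into its sub-arcs in the successive spikes it crosses, every summand is non-increasing in $t$. Therefore $t\mapsto L_{h_{t}}(\alpha^{\star})$ is non-increasing, hence bounded above by its value $L_{h_{0}}(\alpha^{\star})$ at $t=0$ (which is finite since $\alpha^{\star}$ is a good horogeodesic lamination, by Lemma \ref{lemma:length_ineq}).

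The main obstacle I anticipate is not the estimate itself but setting up its hypotheses cleanly: justifying that the representative computing $L_{h_{t}}(\alpha^{\star})$ is genuinely $\phi_{t}(\alpha^{\star})$, so that the length may be evaluated arc by arc, and pinning down that the stretch map really has the asserted local form with the distinguished points fixed at the base $y=1$ of each spike and the horocyclic foliation preserved. Once these normalizations are in place, the monotonicity reduces to the single inequality $y^{-(e^{t}-1)}\leq 1$ for $y\geq 1$ and $t\geq 0$, together with its monotone dependence on $t$.
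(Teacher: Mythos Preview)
Your proposal is correct and follows essentially the same approach as the paper. The paper's proof is a single sentence observing that a horocyclic arc at distance $d$ from the non-foliated region is carried by the stretch to the horocyclic arc at distance $e^{t'-t}d$, whence its length shrinks; your upper half-plane computation $\phi_{t}(x,y)=(x,y^{e^{t}})$ with $d=\log y$ is exactly the explicit coordinate form of that statement, and your inequality $y^{-(e^{t}-1)}\le 1$ for $y\ge 1$ is the concrete version of ``length of a horocycle decreases as its distance from the base increases.''
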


\begin{proof}
By definition of stretches (see \cite{Thurston86}), for given $t\leq t'$, each piece of horocycle of $\alpha^{\star}$ for the metric $h_t$ which is $d$-away from a non-foliated region of $F$ is carried over for the metric $h_{t'}$ to the piece of horocycle that lies $e^{t'-t}d$ from the same non-foliated region. 
This implies that the length of every horocyclic piece of $\alpha^{\star}$ decreases as $t$ increases. 
Hence, $t\mapsto L_{h_{t}}(\alpha^{\star})$ is decreases as $t$ increases.
\end{proof}

\begin{remark}
It can be shown that if the singular leaf $f$ of $F$ contained in $G_F$ is a concatenation of sides of non-foliated regions alternately lying on one side of $f$ then on the other, then the geodesic leaf of $\alpha$ homotopic to $f$ passes through all distinguished points of $f$.
In this case, $I(F_{\mu}(h_{t}),\alpha^{\star}_{sp})$ and $\ell_{h_{t}}(\alpha)$ can be explicitely calculated.
\end{remark}


\begin{thebibliography}{99}


\bibitem{Bonahon} F. Bonahon,
 \textsl{The geometry of Teichmüller space via geodesic currents}, 
 Invent. Math. 92 (1988), 139--162.



\bibitem{FLP79} A. Fathi, F. Laudenbach \& V. Po\'enaru,
\textsl{Travaux de Thurston sur les surfaces},
Ast\'erisque 66--67 (1979).
 
 
\bibitem{Pap91} A. Papadopoulos, 
\textsl{On Thurston's boundary of Teichm\"uller space and the extension of earthquakes}, 
Topology and its Appl. 41 (1991) 147--177.
  


\bibitem{Theret07}G. Th\'eret, 
\textsl{On the negative convergence of Thurston's stretch lines towards the boundary of Teichm\"uller space}, 
Annales Acad. Scien. Fennicae Math. 32 (2007), 381--408.



\bibitem{Theret14}G. Th\'eret, 
\textsl{Convexity of length functions and Thurston's shear coordinates}, 
arXiv:1408.5771


\bibitem{Thurston86} W.Thurston, 
\textsl{Minimal stretch maps between hyperbolic surfaces}, preprint, 1986,
arXiv:math GT/9801039.


\bibitem{Thurston80}  W. P. Thurston, 
\textsl{Geometry and Topology of Three Manifolds},
Lecture Notes, Princeton University.


\end{thebibliography}
\end{document}